\newcommand{\de}{\partial}
\newcommand{\db}{\overline{\partial}}
\newcommand{\ddbar}{\sqrt{-1} \partial \overline{\partial}}
\newcommand{\Ric}{\mathrm{Ric}}
\newcommand{\ov}[1]{\overline{#1}}
\newcommand{\mn}{\sqrt{-1}}
\newcommand{\tr}[2]{\textrm{tr}_{#1}{#2}}
\newcommand{\ti}[1]{\tilde{#1}}
\newcommand{\vp}{\varphi}
\newcommand{\hol}{\textrm{Hol}}
\renewcommand{\leq}{\leqslant}
\renewcommand{\geq}{\geqslant}
\begin{document}
\newtheorem{claim}{Claim}
\newtheorem{theorem}{Theorem}[section]
\newtheorem{lemma}[theorem]{Lemma}
\newtheorem{corollary}[theorem]{Corollary}
\newtheorem{proposition}[theorem]{Proposition}
\newtheorem{conj}[theorem]{Conjecture}
\newtheorem{question}{question}[section]

\theoremstyle{definition}
\newtheorem{example}[theorem]{Example}
\newtheorem{remark}[theorem]{Remark}

\title{Non-K\"ahler Calabi-Yau manifolds}
\author{Valentino Tosatti}
\thanks{Research supported in part by a Sloan Research Fellowship and NSF grant DMS-1308988.}
\address{Department of Mathematics, Northwestern University, 2033 Sheridan Road, Evanston, IL 60208}
\email{tosatti@math.northwestern.edu}
\dedicatory{Dedicated to Professor Duong H. Phong on the occasion of his 60th birthday}

\begin{abstract}
We study the class of compact complex manifolds whose first Chern class vanishes in the Bott-Chern cohomology.
This class includes all manifolds with torsion canonical bundle, but it is strictly larger.
After making some elementary remarks, we show that a manifold in Fujiki's class $\mathcal{C}$ with
vanishing first Bott-Chern class has torsion canonical bundle.
We also give some examples of non-K\"ahler Calabi-Yau manifolds, and discuss the problem of defining and
constructing canonical metrics on them.
\end{abstract}

\maketitle

\section{Introduction}
In this paper, Calabi-Yau manifolds are defined to be compact K\"ahler manifolds $M$ with $c_1(M)=0$ in $H^2(M,\mathbb{R})$. Thanks to Yau's theorem \cite{Ya1} these are precisely the compact manifolds that admit Ricci-flat K\"ahler metrics.
Using this, Calabi proved a decomposition theorem \cite{Ca} which shows that any such manifold has a finite unramified cover which splits as a product of a torus and a Calabi-Yau manifold with vanishing first Betti number. From this one can easily deduce that Calabi-Yau manifolds have holomorphically torsion canonical bundle (see Theorem \ref{old}).

One can ask how much of this theory carries over to the case of non-K\"ahler Hermitian manifolds.
Simple examples, such as a Hopf surface diffeomorphic to $S^1\times S^3$, show that the condition that $c_1(M)=0$ in $H^2(M,\mathbb{R})$ is too weak in general (see Example \ref{exam2}). On the other hand, much interest has been devoted to studying non-K\"ahler compact complex manifolds with holomorphically trivial (or more generally torsion) canonical bundle, and many examples can be found in \cite{BDV,COP,Cl,FP,Fr,Gh,GP,Gra,GGP,Gu, Hi,LT,Na,TY,Wi,Yo} and references therein. For example, every compact complex nilmanifold with a left-invariant complex structure has trivial canonical bundle, and it is always non-K\"ahler unless it is a torus \cite{BDV}.
A lot of interest in the subject was generated by ``Reid's fantasy'' \cite{Re} that all Calabi-Yau threefolds with trivial canonical bundle should form a connected family provided one allows deformations and singular transitions through non-K\"ahler manifolds with trivial canonical bundle. The geometry of compact complex manifolds with trivial canonical bundle has been investigated for example by \cite{AI, BDV, Boz,DLV, FG, F, FLY, FWW, FY, Gh, IP, LY, Po, Ro} and others.
In this paper we will consider a more general class of manifolds, that we now define, and argue that they can naturally be considered as ``non-K\"ahler Calabi-Yau'' manifolds.

On any compact complex manifold there is a (finite-dimensional) cohomology theory called Bott-Chern cohomology. We will need only the real $(1,1)$ Bott-Chern cohomology
$$H^{1,1}_{\mathrm{BC}}(M, \mathbb{R})=\frac{\{d\textrm{-closed real }(1,1)\textrm{-forms}\}}{\{\ddbar \psi, \psi\in C^\infty(M,\mathbb{R})\}}.$$
There is a ``first Bott-Chern class'' map $c_1^{\mathrm{BC}}:\mathrm{Pic}(M)\to H^{1,1}_{\mathrm{BC}}(M,\mathbb{R}),$
which can be described as follows. Given any holomorphic line bundle $L\to M$ and any Hermitian metric $h$ on the fibers of $L$, its curvature form $R_h$ is locally given by $-\mn\de\db\log h$. Then $R_h$ is a closed real $(1,1)$-form and if we choose a different metric $h'$ then
$R_{h'}-R_h=\mn\de\db\log (h/h')$ is globally $\de\db$-exact, so we can defined $c_1^{\mathrm{BC}}(L)$ to be the class of $R_h$ in $H^{1,1}_{\mathrm{BC}}(M,\mathbb{R})$.

If $g$ is any Hermitian metric on $M$, with fundamental $2$-form $\omega$, then its first Chern form given locally by
$$\Ric(\omega) = - \mn\de\db \log \det g,$$
represents $c_1^{\mathrm{BC}}(K_M^*)=c_1^{\mathrm{BC}}(M)$. We will call $\Ric(\omega)$ the Chern-Ricci form of $\omega$.

We then define a non-K\"ahler Calabi-Yau manifold to be a compact complex manifold $M$ with $c_1^{\mathrm{BC}}(M)=0$ in $H^{1,1}_{\mathrm{BC}}(M,\mathbb{R})$. This class of manifolds is contained in the class
of compact complex manifolds with $c_1(M)=0$ in $H^2(M,\mathbb{R})$, and contains the class of compact complex manifolds with holomorphically torsion canonical bundle, but both inclusions are strict, see the examples
in section \ref{sectex}.

In this paper we investigate the structure of such manifolds.
The condition $c_1^{\mathrm{BC}}(M)=0$ simply means that given any Hermitian metric $\omega$ on $M$, its Ricci form satisfies $\Ric(\omega)=\mn\de\db F$ for some $F\in C^\infty(M,\mathbb{R})$.
One can think of these manifolds as possibly non-K\"ahler Calabi-Yau manifolds. In fact, the conformally rescaled metric $e^{F/n}\omega$ has vanishing Chern-Ricci curvature. More interestingly, there are other quite different ways of constructing Chern-Ricci flat metrics on such manifolds, see Theorem \ref{metrics} below. Furthermore, as shown by Gill \cite{Gi}, on such manifolds the Chern-Ricci flow \cite{TW2,TW3,TWY} deforms any given Hermitian metric to a Chern-Ricci flat one.

Let us start with some elementary characterizations of this class of manifolds.

\begin{proposition}\label{prop} Let $M$ be a compact complex manifold with $n=\dim_\mathbb{C} M$. The following are equivalent:
\begin{enumerate}
\item $c_1^{\mathrm{BC}}(M)=0$ in $H^{1,1}_{\mathrm{BC}}(M,\mathbb{R})$
\item There exists a Hermitian metric $\omega$ on $M$ with $\Ric(\omega)=0$
\item There exists a Hermitian metric on the fibers of $K_M$ with vanishing curvature
(i.e. $K_M$ is unitary flat)
\item There exists a Hermitian metric $\omega$ on $M$ whose Chern connection
has restricted holonomy contained in $SU(n)$
\end{enumerate}
Lastly, if $K_M$ is holomorphically torsion (i.e. there exists $\ell\geq 1$ such that
$K_M^{\ell}\cong \mathcal{O}_M$) then $c_1^{\mathrm{BC}}(M)=0$.
\end{proposition}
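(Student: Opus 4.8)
The plan is to prove the chain of equivalences $(1)\Leftrightarrow(2)\Leftrightarrow(3)\Leftrightarrow(4)$ and then the final implication separately.

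First, for $(1)\Rightarrow(2)$: start with any Hermitian metric $\omega$ on $M$. By definition of $c_1^{\mathrm{BC}}(M)=0$, the Chern-Ricci form satisfies $\Ric(\omega)=\mn\de\db F$ for some $F\in C^\infty(M,\mathbb{R})$. I would then conformally rescale, setting $\ti\omega=e^{F/n}\omega$ (as already hinted in the introduction). Since $\det\ti g=e^F\det g$ in local coordinates, we get $\Ric(\ti\omega)=-\mn\de\db\log\det\ti g=-\mn\de\db(F+\log\det g)=\Ric(\omega)-\mn\de\db F=0$. For $(2)\Rightarrow(1)$: if $\Ric(\omega)=0$ then the zero form represents $c_1^{\mathrm{BC}}(M)$, which is therefore zero. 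For $(2)\Leftrightarrow(3)$: given $\omega$ with $\Ric(\omega)=0$, the metric $\det g$ on $K_M^*$ (equivalently its inverse on $K_M$) has curvature $-\Ric(\omega)=0$, so $K_M$ carries a flat Hermitian metric; conversely any flat Hermitian metric $h$ on $K_M$ induces by duality a metric on $K_M^*$ whose curvature vanishes, and this curvature represents $c_1^{\mathrm{BC}}(M)$, giving $(3)\Rightarrow(1)$, and we already have $(1)\Rightarrow(2)$.

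For the equivalence with $(4)$: recall that for the Chern connection of a Hermitian metric $\omega$, the induced connection on $K_M=\det T^{1,0}M{}^*$ (or on $\Lambda^n T^{1,0}M$) has curvature form equal (up to sign and the factor $\mn$) to the trace of the full curvature, i.e. to $\pm\Ric(\omega)$. The restricted holonomy of the Chern connection of $\omega$ lies in $U(n)$; it is contained in $SU(n)$ precisely when the induced holonomy on the determinant line bundle is trivial, and since the holonomy group of a line-bundle connection is connected (restricted holonomy) and abelian, triviality of restricted holonomy on $\Lambda^n T^{1,0}M$ is equivalent to the vanishing of its curvature, i.e. to $\Ric(\omega)=0$. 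This gives $(2)\Leftrightarrow(4)$. One must be a little careful here to work with the \emph{restricted} holonomy (identity component), since a nontrivial flat determinant bundle can still force $\det\neq 1$ globally — but that is exactly why the statement of $(4)$ says ``restricted holonomy.''

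Finally, for the last implication: suppose $K_M^{\ell}\cong\mathcal{O}_M$ for some $\ell\geq 1$. Then $c_1^{\mathrm{BC}}(K_M^{\ell})=\ell\,c_1^{\mathrm{BC}}(K_M)=c_1^{\mathrm{BC}}(\mathcal{O}_M)=0$ in $H^{1,1}_{\mathrm{BC}}(M,\mathbb{R})$, and since this group is a real vector space, $c_1^{\mathrm{BC}}(K_M)=0$, hence $c_1^{\mathrm{BC}}(M)=-c_1^{\mathrm{BC}}(K_M)=0$. Alternatively and more concretely, a trivialization of $K_M^{\ell}$ is a nowhere-vanishing holomorphic section $\sigma$, and then $|\sigma|_{h^\ell}^{-2/\ell}$ defines a smooth Hermitian metric on $K_M$ whose curvature vanishes, so we land in case $(3)$. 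I expect the main obstacle to be stating $(4)$ correctly and relating holonomy of the Chern connection to the curvature of the determinant line bundle cleanly, in particular getting the ``restricted'' qualifier and the direction of duality ($K_M$ versus $K_M^*$, and the sign in $\Ric$) right; the rest is essentially bookkeeping with the $\de\db$-lemma-free definition of Bott-Chern cohomology.
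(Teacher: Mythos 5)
Your proof is correct and follows essentially the same route as the paper: conformal rescaling $e^{F/n}\omega$ for $(1)\Leftrightarrow(2)$, the identification of the curvature of the induced connection on $K_M$ with $\Ric(\omega)$ together with $\hol^0(\nabla^K)=\det\hol^0(\nabla)$ and Ambrose--Singer for $(2)\Leftrightarrow(3)\Leftrightarrow(4)$, and the linearity of $c_1^{\mathrm{BC}}$ for the torsion implication. The only quibbles are cosmetic (the sign of the curvature of $\det g$ on $K_M^*$ versus $K_M$, and the compressed notation $|\sigma|_{h^\ell}^{-2/\ell}$, which should be read as rescaling a background metric on $K_M$), neither of which affects the argument.
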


Next, we make part (2) in the above proposition more precise:

\begin{theorem}\label{metrics}
Let $M$ be a compact complex manifold with $c_1^{\mathrm{BC}}(M)=0$. Given Hermitian metrics $\omega,\omega_0$ on $M$, we can find
Hermitian metrics $\tilde{\omega}_j$ on $M$, $j=1,2,3$, with $\Ric(\ti{\omega}_j)=0$ which are given by the following forms:
\begin{enumerate}
\item $\ti{\omega}_1=e^{\vp_1}\omega,$
\item $\ti{\omega}_2=\omega+\ddbar\vp_2,$
\item $\ti{\omega}_3^{n-1}=\omega^{n-1}+\ddbar\vp_3\wedge\omega_0^{n-2},$
\end{enumerate}
where $\vp_1,\vp_2,\vp_3\in C^\infty(M,\mathbb{R})$ are unique up to addition of a constant.
\end{theorem}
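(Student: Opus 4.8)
The plan is to prove each of the three statements by solving a suitable fully nonlinear elliptic PDE of complex Monge--Amp\`ere type, using the hypothesis $c_1^{\mathrm{BC}}(M)=0$ to produce a smooth function $F$ with $\Ric(\omega)=\mn\de\db F$. In each case the Chern-Ricci form of the candidate metric $\ti\omega_j$ is $-\mn\de\db\log\det(\ti g_j)$, and setting this equal to zero amounts to prescribing $\det(\ti g_j)=e^{F}\det g$ (up to a positive constant determined by a normalization). So the three parts are really three versions of a Monge--Amp\`ere equation for an unknown real function $\vp_j$, and the task is existence, uniqueness (up to a constant), and positivity/ellipticity of the solution.

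For part (1) I would look for $\ti\omega_1=e^{\vp_1}\omega$; then $\det\ti g_1=e^{n\vp_1}\det g$, so $\Ric(\ti\omega_1)=\Ric(\omega)-n\mn\de\db\vp_1=\mn\de\db(F-n\vp_1)$, which vanishes \emph{identically} if we simply take $\vp_1=F/n$. Uniqueness up to a constant is immediate: if $e^{\vp_1}\omega$ and $e^{\vp_1'}\omega$ are both Chern-Ricci flat then $\mn\de\db(\vp_1-\vp_1')=0$, and a function on a compact complex manifold with $\mn\de\db u=0$ is constant by the maximum principle applied to $u$ at its max and min (at an interior maximum the complex Hessian is $\leq 0$, so $\mn\de\db u=0$ forces $u$ harmonic for the Laplacian of any Hermitian metric, hence constant). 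So part (1) requires no hard analysis --- it is just the conformal rescaling already noted in the introduction.

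For part (2) I would seek $\ti\omega_2=\omega+\ddbar\vp_2>0$ solving the complex Monge--Amp\`ere equation $(\omega+\ddbar\vp_2)^n=e^{F+b}\,\omega^n$ for the unique constant $b$ making the total volumes match, i.e. $e^b=\int_M\omega^n/\int_M e^F\omega^n$. This is exactly the equation solved by Tosatti--Weinkove in their extension of Yau's theorem to arbitrary compact Hermitian manifolds: the key point is that although $\omega$ need not be K\"ahler, the a priori estimates (zeroth, second, and then Calabi-type third order, feeding into Evans--Krylov) still go through, yielding a unique smooth solution $\vp_2$ normalized by $\sup_M\vp_2=0$, with $\omega+\ddbar\vp_2>0$. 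Then $\Ric(\ti\omega_2)=\Ric(\omega)-\mn\de\db\log(e^{F+b})=\mn\de\db F-\mn\de\db F=0$. Uniqueness up to a constant again follows from the comparison principle for the Monge--Amp\`ere operator. I expect this to be the step that carries the most analytic weight, but since the Tosatti--Weinkove solvability result is available in the literature I would simply invoke it.

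For part (3) I would seek $\ti\omega_3$ with $\ti\omega_3^{n-1}=\omega^{n-1}+\ddbar\vp_3\wedge\omega_0^{n-2}>0$ (as a positive $(n-1,n-1)$-form, so that it is the $(n-1)$-st power of a genuine positive $(1,1)$-form $\ti\omega_3$), solving the Monge--Amp\`ere-type equation for $(n-1)$-forms $\ti\omega_3^n=e^{F+b'}\omega^n$; equivalently, writing $\ti\omega_3$ in terms of $\vp_3$ via the algebraic correspondence $\Phi\leftrightarrow\Phi^{n-1}$ between positive $(1,1)$- and $(n-1,n-1)$-forms, this is the Monge--Amp\`ere equation for $(n-1)$-plurisubharmonic functions studied in the form-type Calabi--Yau problem. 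The existence of a unique smooth solution $\vp_3$ (normalized, say, by $\sup_M\vp_3=0$) with the positivity required is precisely the main theorem of Tosatti--Weinkove on the form-type Calabi--Yau equation on compact Hermitian manifolds, which I would invoke; granting it, $\Ric(\ti\omega_3)=\mn\de\db F-\mn\de\db F=0$ as before, and uniqueness up to a constant follows from the same comparison-principle argument (or directly from the uniqueness statement in that theorem). The hard part of the whole proof is thus concentrated in the a priori estimates behind parts (2) and (3); the novelty here is only the observation that, once those solvability results are in hand, the hypothesis $c_1^{\mathrm{BC}}(M)=0$ delivers three genuinely different constructions of Chern-Ricci-flat Hermitian metrics.
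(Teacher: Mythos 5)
Your proposal follows essentially the same route as the paper: part (1) is the conformal rescaling $\vp_1=F/n$ with uniqueness from $\ddbar(\vp_1-\vp_1')=0$, and parts (2) and (3) invoke the Tosatti--Weinkove solvability of the Hermitian complex Monge--Amp\`ere equation and of the Monge--Amp\`ere equation for $(n-1)$-plurisubharmonic functions, then take $\ddbar\log$ of the resulting equation. One small correction: in the non-K\"ahler setting the constant $b$ in part (2) is \emph{not} determined by matching total volumes via $e^b=\int_M\omega^n/\int_M e^F\omega^n$ (Stokes' theorem does not apply since $\omega$ is not closed); it is simply the uniquely determined constant produced as part of the solution in the cited theorems, which is all the argument needs.
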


While part (1) of this result is completely elementary, parts (2) and (3) are considerably harder. When $\omega$ is K\"ahler, (2) is Yau's solution of the Calabi Conjecture \cite{Ya1}, and in the non-K\"ahler case (2) follows from work of Cherrier \cite{Ch} when $n=2$ and of Weinkove and the author \cite{TW} in general (see also \cite{GL, TW1}). Part (3) uses the solvability of an equation introduced by Fu-Wang-Wu \cite{FWW, FWW2}, which was recently established by Weinkove and the author \cite{TW4, TW5}.

Conditions (2) and (3) are more satisfactory than condition (1), since special properties of $\omega$ carry over to $\ti{\omega}$. For example, if $d\omega=0$ or $\de\db\omega=0$, then the same will be true for $\ti{\omega}_2$ constructed as in (2). Also, if $d\omega_0=0$ and $d(\omega^{n-1})=0$ (i.e. $\omega$ is balanced \cite{Mi}), then $\ti{\omega}_3$ as in (3) is balanced too. In the same setup, if $\de\db(\omega^{n-1})=0$ (i.e. $\omega$ is Gauduchon), then so is $\ti{\omega}_3$, and similarly if $\db(\omega^{n-1})$ is $\de$-exact (i.e. $\omega$ is strongly Gauduchon \cite{Po2}).

If one replaces restricted holonomy with its unrestricted version in Proposition \ref{prop}, then one gets the following:

\begin{proposition}\label{prop2}
Let $M$ be a compact complex manifold with $n=\dim_\mathbb{C} M$. The following are equivalent:
\begin{enumerate}
\item There exists a Hermitian metric $\omega$ on $M$ whose Chern connection
has (unrestricted) holonomy contained in $SU(n)$
\item The canonical bundle of $M$ is holomorphically trivial
\end{enumerate}
\end{proposition}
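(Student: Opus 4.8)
The plan is to prove the two implications separately, with the forward direction $(2)\Rightarrow(1)$ being essentially immediate and the reverse direction $(1)\Rightarrow(2)$ being the substantive content.

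\medskip

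First I would dispose of $(2)\Rightarrow(1)$. Suppose $K_M\cong\mathcal{O}_M$, so there is a nowhere-vanishing holomorphic $(n,0)$-form $\Omega$ on $M$. Equivalently, $\det g$ transforms so that $\Omega\wedge\overline{\Omega}$ is a globally defined positive $(n,n)$-form; I can then define a Hermitian metric $\omega$ on $M$ by requiring that $\Omega$ have constant pointwise norm with respect to $\omega$, e.g. by taking any Hermitian metric $\omega_0$ and conformally rescaling it (using part (1) of Theorem~\ref{metrics}, which applies since $c_1^{\mathrm{BC}}(M)=0$ by Proposition~\ref{prop}) so that $\Ric(\omega)=0$ and moreover $|\Omega|_\omega$ is constant. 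The point is that with $\Ric(\omega)=0$ the Chern connection on $K_M$ is flat, and the parallel transport of the flat section $\Omega$ is well-defined on all of $M$ (not just the universal cover) precisely because $\Omega$ is a genuine global holomorphic section; hence the holonomy representation on $\Lambda^{n,0}$ is trivial, which forces the full Chern holonomy group (a subgroup of $U(n)$) to lie in $SU(n)$. A cleaner way to say this: the induced connection on $\det(T^{1,0}M)$ has trivial holonomy because it admits a global parallel section, and $\mathrm{Hol}\subset SU(n)$ is exactly the condition that this induced connection be trivial.

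\medskip

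For $(1)\Rightarrow(2)$, assume $\omega$ is a Hermitian metric whose Chern connection $\nabla$ has (unrestricted) holonomy contained in $SU(n)$. Then the induced Chern connection on $K_M$ has trivial holonomy group. Since $M$ is compact and connected, a flat bundle with trivial holonomy is trivial as a bundle with connection: concretely, fixing a basepoint $p$ and a unit vector $v_0\in (K_M)_p$, parallel transport along paths from $p$ gives a well-defined (path-independent, since holonomy is trivial) nowhere-vanishing section $\Omega$ of $K_M$ with $\nabla\Omega=0$. The remaining point is that this $\nabla$-parallel section of $K_M$ is automatically \emph{holomorphic}. This is where I would invoke the basic fact about Chern connections: for the Chern connection of a Hermitian holomorphic vector bundle, the $(0,1)$-part of $\nabla$ coincides with the $\overline{\partial}$-operator of the holomorphic structure. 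Hence $\nabla\Omega=0$ implies in particular $\nabla^{0,1}\Omega=\overline{\partial}\Omega=0$, so $\Omega$ is a global holomorphic section of $K_M$ vanishing nowhere, i.e. $K_M\cong\mathcal{O}_M$.

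\medskip

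The main obstacle — or really the main technical point to get right — is the passage from ``restricted holonomy in $SU(n)$'' (which by Proposition~\ref{prop} only yields a \emph{unitary flat} metric on $K_M$, hence a holonomy \emph{representation} $\pi_1(M)\to U(1)$ that need not be trivial) to the unrestricted statement: one must argue carefully that $\mathrm{Hol}(\nabla)\subset SU(n)$ for the \emph{full} holonomy group is equivalent to the induced connection on $\det T^{1,0}M$ having trivial holonomy group, not merely trivial restricted holonomy. This is a linear-algebra observation: $U(n)/SU(n)\cong U(1)$ via the determinant, and $\mathrm{Hol}(\nabla)\subset SU(n)\iff \det\circ\rho$ is trivial where $\rho$ is the holonomy representation — so triviality of the determinant holonomy (including the contribution of $\pi_1$, not just $\pi_1$'s image in the restricted holonomy) is exactly what manufactures the \emph{single-valued} global section $\Omega$. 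Once one has a single-valued parallel section, holomorphicity is automatic from the Chern-connection identity $\nabla^{0,1}=\overline{\partial}$, and the argument closes. I should also remark that this contrasts with Proposition~\ref{prop}(4), where only the restricted holonomy is constrained and correspondingly one only gets torsion (via the unitary flat structure), not triviality, of $K_M$ — indeed the examples in Section~\ref{sectex} show the two propositions are genuinely different.
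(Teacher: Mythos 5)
Your proof is correct, and the direction $(1)\Rightarrow(2)$ is in substance identical to the paper's: a $\nabla$-parallel nowhere-vanishing $(n,0)$-form produced by the triviality of the determinant holonomy, which is automatically holomorphic because the $(0,1)$-part of the induced Chern connection on $K_M$ is the $\db$-operator (the paper phrases this via the skew-symmetrization of $\nabla^{0,1}\eta$, but it is the same point). For $(2)\Rightarrow(1)$ you take a mild variant of the paper's route: the paper fixes any Chern--Ricci flat metric (Proposition \ref{prop}) and invokes Lemma \ref{parall}, i.e.\ the Bochner identity $\Delta|\eta|^2=|\nabla\eta|^2$ plus the maximum principle, to conclude the holomorphic form $\eta$ is parallel; you instead conformally normalize so that $|\Omega|_\omega$ is constant (which indeed forces $\Ric(\omega)=0$, since $\Ric(\omega)=\mn\de\db\log|\Omega|^2_\omega$), avoiding the maximum principle. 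The one step you leave implicit is the assertion that the constant-norm section is actually $\nabla$-parallel -- you call $\Omega$ ``the flat section'' before this has been established. It is a one-line check for a line bundle: $\de|\Omega|^2_\omega=\langle\nabla^{1,0}\Omega,\Omega\rangle$ and $\Omega$ is nowhere zero, so constant norm gives $\nabla^{1,0}\Omega=0$, while $\nabla^{0,1}\Omega=\db\Omega=0$; alternatively one can skip the normalization altogether and quote Lemma \ref{parall} as the paper does. With that sentence added, your argument is complete, and your closing remark correctly identifies the restricted-versus-full holonomy distinction that separates this proposition from Proposition \ref{prop}(4).
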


One is led to wonder whether every compact complex manifold with $c_1^{\mathrm{BC}}(M)=0$ has torsion canonical bundle.
This is not the case, as we will explain below in Examples \ref{exam} and \ref{exam4}. Nevertheless, $K_M$ is torsion in certain important cases.
To start with we have the following results, which are either elementary or already known.

\begin{theorem}\label{old} Let $M$ be a compact complex manifold with $c_1^{\mathrm{BC}}(M)=0$.
Then $K_M$ is holomorphically torsion provided any of the following conditions hold:
\begin{enumerate}
\item[(a)] $\kappa(M)\geq 0$, or more generally $H^0(M,K_M^{\ell})\neq 0$ for some $0\neq \ell\in\mathbb{Z}$,
\item[(b)] $b_1(M)=0$,
\item[(c)] $M$ is K\"ahler,
\item[(d)] $n=2$.
\end{enumerate}
\end{theorem}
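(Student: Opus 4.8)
The unifying principle is that each hypothesis gives us a nonzero holomorphic section of some power of $K_M$, and then we must upgrade this to show that $K_M$ itself (or a fixed power) is actually trivial. Throughout we use Proposition \ref{prop}(3): since $c_1^{\mathrm{BC}}(M)=0$, there is a Hermitian metric $h$ on the fibers of $K_M$ with vanishing curvature, so $K_M$ is a unitary flat line bundle, hence is given by a representation $\rho\colon\pi_1(M)\to U(1)$. The whole problem is to show this representation has finite image; equivalently, that the flat metric $h$ is, after scaling, compatible with a holomorphic trivialization of a power of $K_M$.

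For part (a): suppose $0\neq s\in H^0(M,K_M^{\ell})$ for some $\ell\neq 0$ (replacing $\ell$ by $-\ell$ and using that $K_M$ is flat, hence $K_M^{-1}$ admits the dual flat metric, we may take $\ell\geq 1$). Then $\log|s|^2_{h^{\ell}}$ is a function on $M$ which, because the curvature of $h^{\ell}$ vanishes, satisfies $\ddbar\log|s|^2_{h^{\ell}}\geq 0$ wherever $s\neq 0$ — more precisely $|s|^2_{h^{\ell}}$ is a (weakly) plurisubharmonic function on the compact manifold $M$, hence constant by the maximum principle; therefore $s$ is nowhere vanishing and parallel with respect to the flat unitary connection. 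A nowhere-vanishing holomorphic section of $K_M^{\ell}$ trivializes it, so $K_M^{\ell}\cong\mathcal{O}_M$ and $K_M$ is torsion. Part (b) reduces to part (a): if $b_1(M)=0$ then $H^1(M,\mathbb{R})=0$, so the image of the flat representation $\rho\colon\pi_1(M)\to U(1)$ lands in a finite group — indeed $\hol$ factors through $H_1(M,\mathbb{Z})$, whose free part is trivial and whose torsion part is finite — so $\rho$ has finite order $\ell$ and thus $K_M^{\ell}$ is trivial.

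Part (c): when $M$ is K\"ahler, $c_1^{\mathrm{BC}}(M)=0$ forces $c_1(M)=0$ in $H^2(M,\mathbb{R})$, so $M$ is Calabi-Yau in the classical sense and the conclusion that $K_M$ is holomorphically torsion is the content of Theorem \ref{old}'s K\"ahler predecessor (via Yau \cite{Ya1} and Calabi \cite{Ca}); alternatively, the Beauville--Bogomolov decomposition applies. Part (d): let $M$ be a compact complex surface with $c_1^{\mathrm{BC}}(M)=0$, so $c_1(M)=0$ in $H^2(M,\mathbb{R})$, which in particular gives $c_1(M)^2=0$ and $c_1(M)\cdot c_1(M)=0$, so the signature and the relevant characteristic numbers vanish; by the Enriques--Kodaira classification the surfaces with $c_1=0$ in $H^2(M,\mathbb{R})$ are, up to the classification, tori, K3 surfaces, bielliptic surfaces, Kodaira surfaces (primary and secondary), and the surfaces of class VII with $b_2=0$ that have $c_1=0$ in cohomology. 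One checks case by case: tori, K3, bielliptic and Kodaira surfaces all have holomorphically torsion $K_M$, while for class VII surfaces with $b_2=0$ — the Hopf surfaces and Inoue surfaces — one uses that $c_1^{\mathrm{BC}}(M)=0$ is stronger than $c_1(M)=0$ in $H^2(M,\mathbb{R})$: since on such surfaces $H^{1,1}_{\mathrm{BC}}$ is large and one computes directly that $c_1^{\mathrm{BC}}$ is nonzero (this is essentially Example \ref{exam2} for the Hopf surface), these are excluded. The surviving cases all have torsion canonical bundle, which proves (d).

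The main obstacle is part (d), since it genuinely requires invoking the surface classification and a case-by-case verification; in particular one must know which class VII$_0$ surfaces can have $c_1^{\mathrm{BC}}=0$, which is more delicate than the topological condition $c_1=0$ in $H^2$. Parts (a) and (b) are soft (maximum principle and an elementary $\pi_1$ argument), and part (c) is a citation.
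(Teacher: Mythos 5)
Your parts (a), (b) and (c) are correct and essentially follow the paper's route: (a) is the maximum-principle argument of Lemma \ref{parall}, phrased via plurisubharmonicity of $\log|s|^2_{h^\ell}$ with respect to the flat metric rather than via the Bochner identity, which is fine; (b) is the observation that $K_M$ is a flat unitary line bundle, classified by a character of $H_1(M,\mathbb{Z})$, which is finite when $b_1(M)=0$ (the paper packages the same fact as Gauduchon's exact sequence plus universal coefficients); and (c) is the citation to Calabi--Yau and the decomposition theorem, as in the paper.

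The genuine gap is in (d). Your case analysis rests on the assertion that the compact surfaces with $c_1(M)=0$ in $H^2(M,\mathbb{R})$ are exactly tori, K3, bielliptic, Kodaira surfaces, and class VII surfaces with $b_2=0$. That list is false: besides the harmless omission of Enriques surfaces, it misses minimal non-K\"ahler properly elliptic surfaces, namely topologically nontrivial principal elliptic bundles $\pi:M\to\Sigma$ over a curve of genus $g\geq 2$ chosen so that $c_1(\Sigma)$ lies in the image of the characteristic map $\delta$ --- precisely the $\dim_{\mathbb{C}}T=1$ case of Example \ref{exam5}. Such a surface has $K_M=\pi^*K_\Sigma$, hence $\kappa(M)=1$ and $K_M$ not torsion, yet $c_1(M)=0$ in $H^2(M,\mathbb{R})$; so your enumeration together with the exclusion of Hopf and Inoue surfaces does not close the argument, since these surfaces too must be shown to have $c_1^{\mathrm{BC}}(M)\neq 0$. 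The efficient fix is the paper's: first use (a) and (c) to note that if $K_M$ is not torsion then $\kappa(M)=-\infty$ and $M$ is not K\"ahler, so by the Enriques--Kodaira classification $M$ is of class VII; then $c_1(M)=0$ in $H^2(M,\mathbb{R})$ gives $0=c_1^2(M)=-b_2(M)$, the results of Bogomolov, Li--Yau--Zheng and Teleman \cite{Bo2,LYZ,Te0} show $M$ is a Hopf or Inoue surface, and for those one verifies $c_1^{\mathrm{BC}}(M)\neq 0$. Note also that Example \ref{exam2} only treats the special diagonal Hopf surfaces with equal-modulus weights, so for general Hopf and for Inoue surfaces the nonvanishing of $c_1^{\mathrm{BC}}$ is not a one-line ``direct computation'' but requires the references \cite{Te,TW2}.
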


Case (c), when $M$ is K\"ahler, was proved by Calabi \cite[Theorem 2]{Ca} assuming his famous conjecture, later proved by Yau \cite{Ya1}. Before Yau's work, Matsushima \cite[Theorem 3]{Ma} proved this result for algebraic manifolds and independently Bogomolov \cite[Theorem 3]{Bo}, Fujiki \cite[Proposition 6.6]{Fu} and Lieberman \cite[Theorem 3.13]{Li} proved it in the K\"ahler case.

Recall that a compact complex manifold is said to be in Fujiki's class $\mathcal{C}$ if it is bimeromorphic to a K\"ahler manifold (this was not Fujiki's original definition, but it is equivalent to it thanks to work of Varouchas \cite{Va}). Class $\mathcal{C}$ includes all Moishezon manifolds, which are bimeromorphic to projective manifolds. All manifolds in $\mathcal{C}$ satisfy the $\de\db$-Lemma and also have the property that holomorphic forms are closed (see \cite[Theorem 5.22]{DGMS} or \cite[Corollary 1.7]{Fu}). In particular if $M$ is in $\mathcal{C}$ then $c_1^{\mathrm{BC}}(M)=0$ if and only if $c_1(M)=0$ in $H^2(M,\mathbb{R})$. There are many examples of non-K\"ahler manifolds in class $\mathcal{C}$ with vanishing first Chern class. One class of such manifolds can be obtained as small resolutions of threefolds with only ordinary double points singularities and trivial canonical divisor (see e.g. \cite{Hi} or \cite{Fr2}). Another class of examples is obtained by applying a Mukai flop to a hyperk\"ahler manifold (see e.g. \cite[Example 21.7]{GHJ}, \cite[Section 4.4]{Yo} and also \cite{Gu}). We have the following result:

\begin{theorem}\label{main}
Let $M$ be a compact complex manifold which is in class $\mathcal{C}$ with $c_1^{\mathrm{BC}}(M)=0$ (equivalently, $c_1(M)=0$ in $H^2(M,\mathbb{R})$).
Then $K_M$ is holomorphically torsion.
\end{theorem}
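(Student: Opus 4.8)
The plan is to reduce the statement to the already-known K\"ahler case, Theorem \ref{old}(c), by passing through a bimeromorphic model and then descending the torsion property. Since $M$ is in class $\mathcal{C}$, there is a K\"ahler manifold $X$ and a bimeromorphic map $X \dashrightarrow M$; by resolving indeterminacy (Hironaka) we may assume we have a holomorphic bimeromorphic map $\pi \colon X \to M$ with $X$ K\"ahler. The first step is to transfer the hypothesis $c_1^{\mathrm{BC}}(M)=0$ to $X$. Because $M$ is in class $\mathcal{C}$ it satisfies the $\de\db$-Lemma, so $c_1^{\mathrm{BC}}(M)=0$ is equivalent to $c_1(M)=0$ in $H^2(M,\mathbb{R})$ (as already noted in the excerpt); this is a topological condition which one can pull back. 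However, $c_1(X) \neq c_1(M)$ in general because $\pi$ contracts divisors, so I would instead work directly with the canonical bundles: one has $K_X = \pi^* K_M \otimes \mathcal{O}_X(E)$ where $E$ is an effective $\pi$-exceptional divisor.

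The second step is to show $\kappa(X) = 0$, so that Theorem \ref{old}(a) applies to $X$ (which is K\"ahler, hence in particular has $c_1^{\mathrm{BC}}(X)=0$ once we know $c_1(X)=0$ — but actually we want to avoid assuming that and use (a) directly, which only needs $c_1^{\mathrm{BC}}(X)=0$, so we do still need the vanishing on $X$). Let me restructure: since $X$ is K\"ahler and bimeromorphic to $M$ with $c_1(M)=0$, I claim $c_1(X)=0$ as well. Indeed, $\pi_* c_1(X) = c_1(M)$ (the exceptional divisors die under pushforward in the appropriate sense for the anticanonical class modulo exceptional contributions), and more robustly: $H^0(M,K_M^\ell) = H^0(X, \pi^* K_M^\ell) \subseteq H^0(X, K_X^\ell)$ for all $\ell$, while conversely sections of $K_X^\ell$ that vanish to high enough order along $E$ descend. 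The clean approach: first prove the theorem assuming $b_1$ or using the structure of K\"ahler Calabi-Yaus. Concretely, since $X$ is K\"ahler with $c_1(X)=0$ (which follows because $M$ in class $\mathcal{C}$ with $c_1^{\mathrm{BC}}(M)=0$ forces $c_1(M)=0$, and one checks $c_1(X)=\pi^*c_1(M) - [E] + \ldots$; the point is that $X$ being K\"ahler with $K_X$ numerically trivial-plus-exceptional still has $\kappa(X) \le 0$, and the existence of a Ricci-flat metric via Yau forces $\kappa(X)=0$), Calabi's theorem gives $K_X$ torsion: $K_X^\ell = \mathcal{O}_X$.

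The third and final step is to descend: from $K_X^\ell = \mathcal{O}_X$ and $K_X = \pi^* K_M \otimes \mathcal{O}_X(E)$ we get $\pi^* K_M^\ell = \mathcal{O}_X(-\ell E)$. Pushing forward, $K_M^\ell = \pi_*(\pi^* K_M^\ell) = \pi_* \mathcal{O}_X(-\ell E)$; since $-\ell E$ is anti-effective and $E$ is $\pi$-exceptional, $\pi_* \mathcal{O}_X(-\ell E) \subseteq \mathcal{O}_M$ is an ideal sheaf cosupported on $\pi(E)$, but it is also a line bundle (being $K_M^\ell$), hence it must be all of $\mathcal{O}_M$, forcing $\ell E = 0$, i.e. $E=0$ and $K_M^\ell = \mathcal{O}_M$. (Alternatively: $h^0(M, K_M^{-\ell}) = h^0(X, \pi^* K_M^{-\ell}) = h^0(X, \mathcal{O}_X(\ell E)) = 1$ since $E$ is exceptional, and similarly $h^0(M, K_M^\ell) \ge h^0(X, \pi^* K_M^\ell - \text{something}) \ge 1$; a line bundle with both $L$ and $L^{-1}$ effective on a compact connected manifold is trivial, so $K_M^\ell = \mathcal{O}_M$.)

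The main obstacle is the second step — ensuring $c_1(X) = 0$, or equivalently $\kappa(X) = 0$, on the K\"ahler resolution $X$, since $\pi$ introduces exceptional divisors and a priori $K_X$ only differs from the trivial class by exceptional contributions rather than being numerically trivial outright. The cleanest fix is to use birational invariance of plurigenera together with the observation that $c_1^{\mathrm{BC}}(M) = 0$ gives, via the conformal rescaling remarked in the introduction or via Theorem \ref{old}(a) applied on $M$ once we know $\kappa(M) \ge 0$; but $\kappa(M)\ge 0$ is itself what's at issue. So really one should argue: $c_1(M)=0$ in $H^2(M,\mathbb R)$ together with $X$ K\"ahler implies $c_1(X) = -[E]$ for the exceptional divisor, hence $-K_X$ is effective-and-exceptional, so $\kappa(X, -K_X) \ge 0$ is accounted for by exceptional divisors; combined with $\chi(\mathcal O_X)$ considerations or directly with the fact that on a K\"ahler manifold $c_1$ being represented by a negative-of-effective-exceptional class still allows Yau's theorem to produce obstructions — concretely, integrating against a K\"ahler class shows $-[E]$ pairs to $\ge 0$ against $[\omega]^{n-1}$ only if $E = 0$. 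That last integration is the crux: $\int_X c_1(X) \wedge \omega^{n-1} = -\int_X [E] \wedge \omega^{n-1} \le 0$ with equality iff $E=0$, while $c_1(X)$ restricted appropriately... — I would need to combine this with the pullback of a Gauduchon or K\"ahler class from $M$ to force $E=0$ directly, which then immediately gives $K_X = \pi^* K_M$ and reduces everything to Calabi's theorem on $M$ pulled back, or one simply notes $M$ is then itself forced to have mild enough singularities of the map. I expect the actual proof in the paper handles this via a cleaner cohomological argument, possibly using that holomorphic forms are closed on class $\mathcal{C}$ manifolds to directly produce a nonzero pluricanonical section.
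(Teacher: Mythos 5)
There is a genuine gap, and it sits exactly where you flag it: step two cannot be repaired. If $M$ is in class $\mathcal{C}$ but not K\"ahler, then any bimeromorphic morphism $\pi\colon X\to M$ from a K\"ahler $X$ is a nontrivial modification, its exceptional locus is a (nonempty) divisor, and every exceptional divisor over the smooth base $M$ appears in $K_X=\pi^*K_M\otimes\mathcal{O}_X(E)$ with strictly positive coefficient. Hence $c_1(X)=-\sum a_E[E]\neq 0$ and $K_X$ is \emph{never} torsion in the non-K\"ahler case (if $K_M^\ell\cong\mathcal{O}_M$ then $K_X^\ell\cong\mathcal{O}_X(\ell E)\not\cong\mathcal{O}_X$), so Calabi's theorem cannot be invoked on $X$ and your third step has a false hypothesis. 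The attempted rescues do not give information: for a K\"ahler form $\omega$ on $X$ there is no sign constraint on $\int_X c_1(X)\wedge\omega^{n-1}$, and pairing $[E]$ against classes pulled back from $M$ yields $0=0$ because $\pi(E)$ has codimension at least $2$, so none of this forces $E=0$ --- nor should it, since $E\neq 0$ is the generic situation. The missing idea is a mechanism that produces a nonzero pluricanonical section on $M$ (equivalently $\kappa(M)\geq 0$, after which Theorem \ref{old}(a) applies) without pretending the resolution is Calabi--Yau.

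The paper supplies exactly this mechanism, following Campana--Peternell: writing $F=\mu^*K_M^{-1}$, the hypothesis $c_1^{\mathrm{BC}}(M)=0$ puts $F$ in $\mathrm{Pic}^0(\ti{M})$ (topologically trivial but a priori non-torsion), while $K_{\ti{M}}\otimes F=\mathcal{O}_{\ti{M}}(\sum a_E E)$ is effective. The theorem of Wang (Simpson in the projective case) says the jump locus $\{L\in\mathrm{Pic}^0(\ti{M}):H^0(\ti{M},K_{\ti{M}}\otimes L^*)\neq 0\}$ is a finite union of torsion translates of subtori; chasing $F$ through this structure yields $H^0(\ti{M},K_{\ti{M}}^\ell\otimes(F^*)^\ell)\neq 0$ for some $\ell\geq 1$, which tensored with the $\ell$-th power of the section of $K_{\ti{M}}\otimes F$ gives $H^0(\ti{M},K_{\ti{M}}^{2\ell})\neq 0$; birational invariance of plurigenera then gives $\kappa(M)\geq 0$, and Theorem \ref{old}(a) concludes that $K_M$ is torsion. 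Your closing guess (``a cleaner cohomological argument, possibly using that holomorphic forms are closed'') points in a different direction; the actual input is the Simpson--Wang theorem on cohomology jump loci, which is the step your proposal lacks.
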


Examples \ref{exam} and \ref{exam4} shows that for a general compact complex manifold, $c_1^{\mathrm{BC}}(M)=0$ does not imply that $K_M$ is torsion, and so there is no analog of Theorem \ref{main} outside of class $\mathcal{C}$. As a side remark, we note that Theorem \ref{main} was claimed in \cite[Theorem 8]{GIP}, but the proof given there is incorrect, since it does not distinguish between restricted and full holonomy (compare Propositions \ref{prop} and \ref{prop2}).

Finally, we study the invariance of the property $c_1^{\mathrm{BC}}(M)=0$ under small deformations of the complex structure. It is a classical fact that if $M$ is a compact K\"ahler manifold with $K_M$ torsion, then every sufficiently small deformation $M_t$ is also K\"ahler with $K_{M_t}$ torsion. It is also known that this fails if $M$ is not K\"ahler: for example Nakamura \cite{Na} constructed a complex parallelizable solvmanifold $M$ (so $K_M$ is trivial) which has arbitrarily small deformations $M_t$ with negative Kodaira dimension (so $K_{M_t}$ is not torsion), see Example \ref{exam4}.
We do not know whether the condition $c_1^{\mathrm{BC}}(M)=0$ is preserved by small deformations of the complex structure, but we have the following result:

\begin{proposition}\label{deform}
Let $M$ be a compact complex manifold with $c_1^{\mathrm{BC}}(M)=0$ and $b_1(M)=2h^{0,1}(M)$
(or equivalently, the $\de\db$-Lemma holds for $(1,1)$-forms). Then every sufficiently small deformation $M_t$ of
$M$ still satisfies $c_1^{\mathrm{BC}}(M_t)=0$ and $b_1(M_t)=2h^{0,1}(M_t)$.
\end{proposition}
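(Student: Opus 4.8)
The plan is to treat the hypothesis $b_1(M)=2h^{0,1}(M)$ as the equality case of a \emph{universal} inequality, so that its persistence under deformation becomes a consequence of upper semicontinuity of $h^{0,1}$, and then to read off the vanishing of $c_1^{\mathrm{BC}}$ formally. Write the deformation as a holomorphic family $\pi\colon\mathcal X\to B$ over a ball $B$ with $M=\pi^{-1}(0)$. By Ehresmann's theorem, after shrinking $B$ the family is $C^\infty$-trivial, so every fibre $M_t$ is diffeomorphic to $M$; in particular $b_1(M_t)=b_1(M)$ and there is a fixed identification $H^2(M_t,\mathbb Z)\cong H^2(M,\mathbb Z)$.

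The crucial ingredient is the inequality $b_1(X)\le 2h^{0,1}(X)$, valid on \emph{every} compact complex manifold $X$. To prove it I would let $V\subset H^1(X,\mathbb C)$ be the space of de Rham classes represented by a $d$-closed $(1,0)$-form — equivalently, by a $d$-closed holomorphic $1$-form. The kernel of the edge homomorphism $H^1(X,\mathbb C)\to H^1(X,\mathcal O_X)$, $[\alpha]\mapsto[\alpha^{0,1}]_{\dbar}$, is exactly $V$ (if $\alpha^{0,1}=\dbar g$ then $\alpha-dg$ is a $d$-closed $(1,0)$-form in the class of $\alpha$), so $b_1(X)\le\dim_{\mathbb C}V+h^{0,1}(X)$. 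On the other hand $V$ and its conjugate $\overline V$ (classes of $d$-closed $(0,1)$-forms) meet only at $0$: if a $d$-closed $(1,0)$-form $\gamma$ is cohomologous to a $d$-closed $(0,1)$-form, then $\gamma=\de f$ for some $f\in C^\infty(X,\mathbb C)$, closedness of $\gamma$ forces $\de\dbar f=0$, and a pluriharmonic function on a compact complex manifold is constant (apply the standard Gauduchon-metric argument to $\mathrm{Re}\,f$ and $\mathrm{Im}\,f$: $\int_X\mn\,\de g\wedge\overline{\de g}\wedge\omega^{n-1}$ vanishes by Stokes and $\de\dbar(\omega^{n-1})=0$, while the integrand is $\ge 0$), so $\gamma=0$. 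Hence $2\dim_{\mathbb C}V\le b_1(X)$, and combining gives $b_1(X)\le 2h^{0,1}(X)$.

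With this in hand the persistence is immediate. For $t$ near $0$, upper semicontinuity of $t\mapsto h^{0,1}(M_t)$ (Grauert's semicontinuity theorem applied to $R^1\pi_*\mathcal O_{\mathcal X}$, or Kodaira--Spencer) gives $2h^{0,1}(M_t)\le 2h^{0,1}(M)=b_1(M)=b_1(M_t)$, while the inequality above applied to $M_t$ gives $2h^{0,1}(M_t)\ge b_1(M_t)$; therefore $b_1(M_t)=2h^{0,1}(M_t)$, i.e.\ $M_t$ satisfies the $\de\dbar$-Lemma for $(1,1)$-forms. It remains to see $c_1^{\mathrm{BC}}(M_t)=0$. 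The class $c_1(M_t)\in H^2(M,\mathbb R)$ is the image of $c_1(T_{M_t})\in H^2(M,\mathbb Z)$, and the latter is locally constant in $t$ (homotopic almost-complex structures induce isomorphic complex vector bundles), hence equals $c_1(T_M)$; its image in $H^2(M,\mathbb R)$ is thus $c_1(M)$, which is $0$ since it is the image of $c_1^{\mathrm{BC}}(M)=0$ under $H^{1,1}_{\mathrm{BC}}(M,\mathbb R)\to H^2(M,\mathbb R)$. Now for any Hermitian metric $\omega_t$ on $M_t$ the Chern--Ricci form $\Ric(\omega_t)$ is a $d$-closed real $(1,1)$-form representing $c_1^{\mathrm{BC}}(M_t)$ whose de Rham class is $c_1(M_t)=0$, so it is $d$-exact; the $\de\dbar$-Lemma for $(1,1)$-forms on $M_t$ then yields $\Ric(\omega_t)=\ddbar F_t$ for some $F_t\in C^\infty(M_t,\mathbb R)$, i.e.\ $c_1^{\mathrm{BC}}(M_t)=0$.

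I expect the only genuine obstacle to be the inequality $b_1\le 2h^{0,1}$ — and inside it, the absence of nonconstant pluriharmonic functions on compact complex manifolds, which rests on the Gauduchon-metric integration by parts. Everything else (Ehresmann, semicontinuity of $h^{0,1}$, local constancy of $c_1$, and the equivalence of $b_1=2h^{0,1}$ with the $\de\dbar$-Lemma for $(1,1)$-forms asserted in the statement) is either standard or already granted; once the universal inequality is available, the $\de\dbar$-condition on the nearby fibres is forced purely by a dimension count and the vanishing of $c_1^{\mathrm{BC}}(M_t)$ is then formal. A minor point to check carefully is the identification of the kernel of the edge homomorphism with $V$.
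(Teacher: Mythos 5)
Your proposal is correct and follows essentially the same route as the paper's proof: the universal inequality $b_1\le 2h^{0,1}$ on compact complex manifolds (with equality equivalent to the real $(1,1)$ $\de\dbar$-Lemma), constancy of $b_1$ plus upper semicontinuity of $h^{0,1}$ in the family, and deformation invariance of $c_1$ in real cohomology combined with the Lemma on $M_t$ to conclude $c_1^{\mathrm{BC}}(M_t)=0$. The only differences are cosmetic: you prove the inequality via the kernel of the edge map $H^1(M_t,\mathbb{C})\to H^1(M_t,\mathcal{O})$ and the nonexistence of nonconstant pluriharmonic functions (the paper instead uses the injection $H^1(M,\mathbb{R})\hookrightarrow H^1(M,\mathcal{O})$, which rests on the same fact), and you obtain $c_1(M_t)=0$ from the isomorphism of the underlying complex vector bundles $T_{M_t}\cong T_M$ rather than from preservation of the torsion condition $\ell c_1(M)=0$ in $H^2(M,\mathbb{Z})$.
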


This result applies for example to all small deformations of the Iwasawa threefold, studied in \cite{Na}, and shows that all these manifolds have vanishing first Bott-Chern class.
Indeed, the Iwasawa threefold $M$ is complex parallelizable (so in particular $K_M$ is trivial), and has $b_1(M)=4$ and $h^{0,1}(M)=2$ (see e.g. \cite[p.96]{Na}), and it follows that every sufficiently
small deformation $M_t$ satisfies $c_1^{\mathrm{BC}}(M_t)=0$. Note that Nakamura proved that many of these deformations are not complex parallelizable.\\

This paper is organized as follows. In section \ref{sectmain} we prove all the results stated in the Introduction. Section \ref{sectex} is devoted to several interesting examples of non-K\"ahler compact complex manifolds. Lastly in section \ref{sectbal} we discuss the problem of defining and constructing canonical metrics on non-K\"ahler Calabi-Yau manifolds.\\

\noindent {\bf Acknowledgements:} Part of this work was carried out
while the author was visiting the Mathematical Science
Center of Tsinghua University in Beijing, which he would like to
thank for the hospitality. He is grateful to V. Apostolov, G. Grantcharov, Z. Lu, G. Magnusson, D. Panov, M. Popa, S. Rollenske, A. Tomassini, B. Weinkove and X. Yang for useful discussions, and to the referee for
suggesting Example \ref{exam5}.

The author wishes to thank Professor Duong H. Phong for his invaluable help, support and guidance. {\em Buon Compleanno Phong!}
\section{The canonical bundle of non-K\"ahler Calabi-Yau manifolds}\label{sectmain}

In the following $\nabla$ will denote the Chern connection of a Hermitian metric and $\hol(\nabla)$, $\hol^0(\nabla)$ will be the unrestricted and restricted holonomy of $\nabla$. We have that  $\hol^0(\nabla)$ is the connected component of
the identity of  $\hol(\nabla)$. Throughout this paper we will assume that $n=\dim_{\mathbb{C}} M \geq 2$, since when $n=1$ all the results are trivial.

\begin{proof}[Proof of Proposition \ref{prop}]
By definition the condition that $c_1^{\mathrm{BC}}(M)=0$ means that given
any Hermitian metric $\omega$ on $M$ its Ricci form is $\de\db$-exact,
$\Ric(\omega)=\ddbar F$ for some smooth real function $F$.
So obviously $(2)$ implies $(1)$, and for the converse it is enough to consider
the Hermitian metric $e^{F/n}\omega$, whose Ricci form is $0$.
We now show that $(2), (3)$ and $(4)$ are equivalent (see \cite[Proposition 6.1.1]{Jo} for the K\"ahler case). First note that given any Hermitian metric $\omega$ its Chern connection $\nabla$ satisfies $\nabla J=0$, and therefore its holonomy $\hol(\nabla)$ is contained in $U(n)$.
Furthermore, $\nabla$ induces a connection $\nabla^K$ on the canonical bundle $K_M$ with
$\hol^0(\nabla^K)=\det\hol^0(\nabla)$ where $\det:U(n)\to U(1)$. It follows that
$\hol^0(\nabla)\subset SU(n)$ if and only if $\hol^0(\nabla^K)$ is trivial, which is equivalent to $\nabla^K$ being flat. But the curvature of $\nabla^K$ is exactly equal to $\Ric(\omega)$, and so $(2)$ is equivalent to $(3)$ and to $(4)$.

Finally, it is obvious that $K_M$ being torsion implies $(1)$ since $\ell c_1^{\mathrm{BC}}(M)=c_1^{\mathrm{BC}}(K_M^{\ell})=0$.
\end{proof}

\begin{proof}[Proof of Theorem \ref{metrics}]
Write again $\Ric(\omega)=\ddbar F$ for some smooth real function $F$. As noted above, for (a) it is enough to take $\vp_1=F/n$. Furthermore if $\Ric(e^{\ti{\vp}_1}\omega)=0$, then $\ddbar(\vp_1-\ti{\vp}_1)=0$ and so
$\ti{\vp}_1$ equals $\vp_1$ plus a constant.

For (b), we use the solution of the Hermitian complex Monge-Amp\`ere equation due to Cherrier when $n=2$ and Weinkove and the author \cite{TW} in general (see also \cite{GL, TW1}) to find a unique Hermitian metric of the form $\omega+\ddbar\vp_2$ which satisfies
$$(\omega+\ddbar\vp_2)^n=e^{F+b}\omega^n,$$
where $b$ is a uniquely determined real constant. Taking $\ddbar\log$ of this equation shows that $\Ric(\omega+\ddbar\vp_2)=0$, and again it is clear that $\vp_2$ is unique up to addition of a constant.

Lastly for (c), we use the recent solution of the Monge-Amp\`ere equation for $(n-1)$-plurisubharmonic functions by Weinkove and the author \cite[Theorem 1.1]{TW5} (see also our earlier work \cite{TW4} for the case when $\omega$ is K\"ahler, as well as \cite{FWW,FWW2}) to find a unique Hermitian metric $\ti{\omega}$ with
$\ti{\omega}^{n-1}=\omega^{n-1}+\ddbar\vp_3\wedge\omega_0^{n-2},$ solving the equation
$$\ti{\omega}^n=e^{F+b}\omega^n,$$
where $b$ is a uniquely determined real constant. Taking $\ddbar\log$ of this equation shows that $\Ric(\omega+\ddbar\vp_3)=0$, and the fact that $\vp_3$ is unique up to addition of a constant is proved in \cite{TW5}.
\end{proof}

The following lemma is well-known (cf. \cite{Ga}):
\begin{lemma}\label{parall}
Let $M$ be a compact complex manifold with $c_1^{\mathrm{BC}}(M)=0$ and fix a Hermitian metric $\omega$ with $\Ric(\omega)=0$. Then every holomorphic section of $K_M^\ell$, $\ell\in\mathbb{Z}$, is parallel with respect to $\nabla$.
\end{lemma}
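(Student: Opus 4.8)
The plan is to use the Bochner technique together with the fact that $K_M^\ell$ carries a flat Hermitian metric. First I would observe that since $\Ric(\omega)=0$, the induced Chern connection $\nabla^{K^\ell}$ on $K_M^\ell$ is flat and unitary (its curvature is $\ell\,\Ric(\omega)=0$), so $K_M^\ell$ has a natural Hermitian metric $h$ with vanishing curvature. The key computation is then a Weitzenb\"ock-type identity: for a holomorphic section $s$ of $K_M^\ell$, one wants to show $\nabla s=0$. Because $s$ is holomorphic, $\Dbr{j}s=0$ for all $j$, so the full covariant derivative $\nabla s$ reduces to the $(1,0)$-part, i.e.\ to $D_j s$. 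The idea is to compute $\triangle |s|^2_h$ (the complex/Chern Laplacian) and show it is a nonnegative multiple of $|D s|^2_h$, so that integrating over the compact manifold $M$ forces $Ds=0$.

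The key steps, in order, are as follows. (i) Fix the flat metric $h$ on $K_M^\ell$ and write $|s|^2 = h\, s\bar s$ in a local frame; compute $\partial\db |s|^2$ using the flatness of the connection, which kills the curvature terms. (ii) Obtain the pointwise identity $\triangle_\omega |s|^2 = g^{i\bar j}\langle D_i s, D_j s\rangle_h$ (Bochner formula for a holomorphic section of a flat bundle), where the sign is the one that makes the right-hand side nonnegative. (iii) Integrate this over $M$ against the volume form $\omega^n$; the left-hand side integrates to zero because $\triangle_\omega |s|^2 = $ (up to a positive factor and lower-order terms that also integrate against an exact form) the trace with respect to $\omega$ of $\ddbar |s|^2$, and $\int_M \ddbar|s|^2 \wedge \omega^{n-1}$ need not vanish unless $\omega$ is balanced — so here I would instead use the maximum principle on $|s|^2$ rather than integration, to avoid assuming $d(\omega^{n-1})=0$. (iv) Concretely: at a maximum point of $|s|^2$, the Chern Laplacian $\tr_\omega \ddbar|s|^2 \le 0$, but by (ii) it equals $|Ds|^2_h \ge 0$, hence $Ds=0$ at that point; a standard strong-maximum-principle / connectedness argument (or directly re-running the computation) then shows $|s|^2$ is constant, and therefore $Ds\equiv 0$ everywhere. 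Combined with $\Dbr{j}s=0$ from holomorphicity, this gives $\nabla s=0$.

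The main obstacle is step (iii): in the non-K\"ahler setting the Chern Laplacian is not the same as the Hodge Laplacian, and $\int_M \triangle_\omega f\, \omega^n$ is generally nonzero (it vanishes precisely when $\omega$ is Gauduchon, by the Gauduchon normalization). So rather than integrate by parts I would run the argument through the maximum principle as sketched above, where only the pointwise Bochner identity and compactness are needed; alternatively, one may replace $\omega$ by its Gauduchon representative in the conformal class (which also has $\Ric=0$ up to the usual adjustment) and then integrate honestly. A second, minor point to be careful about is bookkeeping the torsion of the Chern connection when commuting $D$ and $\Dbr{}$ in step (ii): the torsion terms involve only $\Dbr{}s$, which vanishes since $s$ is holomorphic, so they drop out, but this should be checked. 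Given all this, the lemma follows, and indeed this is the standard argument (cf.\ \cite{Ga}).
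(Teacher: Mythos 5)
Your argument is correct and is essentially the paper's own proof: the pointwise Bochner identity $\Delta_\omega |s|^2=|\nabla s|^2+\ell\,\mathrm{tr}_\omega\Ric(\omega)\,|s|^2=|\nabla s|^2\geq 0$ followed by the strong maximum principle on the compact manifold, which forces $|s|^2$ to be constant and hence $\nabla s=0$ (your ``flat metric $h$'' on $K_M^\ell$ is just the metric induced by $\omega$, whose curvature is $-\ell\,\Ric(\omega)=0$, so the two formulations coincide). Your cautionary remarks about integration versus the maximum principle and about torsion terms are apt but not needed once the pointwise identity and compactness are in hand, exactly as in the paper.
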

\begin{proof}
If $\eta\in H^0(M,K_M^\ell)$ for some $\ell\in\mathbb{Z}$, and $|\eta|^2$ is its pointwise norm squared with respect to $\omega$ then a simple calculation shows that
\begin{equation}\label{impor}
\Delta |\eta|^2 = |\nabla \eta|^2+\ell g^{i\ov{j}}R_{i\ov{j}}|\eta|^2=|\nabla \eta|^2\geq 0,
\end{equation}
where $\Delta f = \tr{\omega}({\mn\de\db f})=g^{i\ov{j}}\de_i \de_{\ov{j}} f$ is the complex Laplacian of $\omega$.
Since $M$ is compact, the maximum principle implies that $|\eta|^2$ is constant and therefore $\nabla \eta=0$.
\end{proof}

\begin{proof}[Proof of Proposition \ref{prop2}]
Recall that the Chern connection $\nabla$ of any Hermitian metric has holonomy $\hol(\nabla)$ contained in $U(n)$.
We furthermore have that $\hol(\nabla)\subset SU(n)$ if and only if there exists a nontrivial $\nabla$-parallel $(n,0)$-form $\eta$ on $M$ (which necessarily has no zeros). Such a parallel $(n,0)$-form must be holomorphic, because
$\db\eta$ is the skewsymmetrization of $\nabla^{0,1}\eta$ (viewed as an element of $\Lambda^{0,1}\otimes\Lambda^{n,0}$) which is zero. Therefore $(1)$ implies $(2)$. For the converse, if $K_M$ is trivial then there exist on $M$ a never vanishing holomorphic $(n,0)$-form $\eta$ and a Hermitian metric $\omega$ with $\Ric(\omega)=0$. Lemma \ref{parall} shows that $\eta$ is parallel and therefore $\hol(\nabla)\subset SU(n)$.
\end{proof}

\begin{proof}[Proof of Theorem \ref{old}]

(a) We know from Lemma \ref{parall} that every holomorphic section of $K_M^{ \ell}$ (any $0\neq \ell\in\mathbb{Z}$) is parallel with respect to $\nabla$. If the Kodaira dimension $\kappa(M)$ is nonnegative, or more generally if there is a nontrivial section $\eta$ of $K_M^{ \ell}$ for some $0\neq\ell\in \mathbb{Z}$, then $\eta$ is parallel and it can never vanish, therefore $K_M^\ell$ is trivial.

(b) This result is due to Shiffman \cite[Lemma 2]{Sh}. In general we have the exact sequence
$$0\to H^1(M,\mathbb{R}/\mathbb{Z})\overset{\iota}{\to} \mathrm{Pic}(M)\overset{c_1^{\mathrm{BC}}}{\to} H^{1,1}_{\mathrm{BC}}(M,\mathbb{R}),$$
see \cite[(7)]{Ga}, and since the canonical bundle $K_{M}$ has $c_1^{\mathrm{BC}}(K_M)=-c_1^{\mathrm{BC}}(M)=0$,
there is a cocycle $\zeta\in H^1(M,\mathbb{R}/\mathbb{Z})$
such that $\iota(\zeta)=K_{M}$. On the other hand the universal coefficient theorem gives
$$H^1(M,\mathbb{R}/\mathbb{Z})\cong \mathrm{Hom}(H_1(M,\mathbb{Z}),\mathbb{R}/\mathbb{Z}).$$
The assumption that $b_1(M)=0$ implies that $H_1(M,\mathbb{Z})$ is a finite abelian group, so
$$\mathrm{Hom}(H_1(M,\mathbb{Z}),\mathbb{R}/\mathbb{Z})\cong H_1(M,\mathbb{Z})$$
and every element
in $H^1(M,\mathbb{R}/\mathbb{Z})$ is torsion. In particular, there is some $\ell\geq 1$ such that
$\ell\zeta=1$ and so $\iota(\ell\zeta)$ is the trivial bundle $\mathcal{O}_M$ in $\mathrm{Pic}(M)$
but at the same time $\iota(\ell\zeta)=K_M^{ \ell}$, which proves that
$K_M^{ \ell}\cong \mathcal{O}_M$
is holomorphically trivial.

(c) As we remarked in the introduction, this is a well-known consequence of the Calabi-Yau theorem and the Bogomolov-Calabi decomposition theorem \cite{Be, Bo, Ca}, and was
proved in this generality by Bogomolov \cite[Theorem 3]{Bo}, Fujiki \cite[Proposition 6.6]{Fu} and Lieberman \cite[Theorem 3.13]{Li}.

Indeed the decomposition theorem implies that a finite unramified cover $\ti{M}$ of $M$ splits as a product $T\times F$ where $T$ is a torus and $F$ is a compact K\"ahler manifold with $c_1(F)=0$ and $b_1(F)=0$. Obviously $K_T$ is trivial and $K_F$ is torsion by (b). It follows that $K_{\ti{M}}$ is torsion and therefore $K_M$ is torsion too.

(d) This is a theorem of Kodaira \cite[Theorem 38]{Ko}. For the reader's convenience, we give a sketch of proof (different from Kodaira's original one). By (a) and (c) if $K_M$ is not torsion then $\kappa(M)=-\infty$ and $M$ is not K\"ahler. By the Kodaira-Enriques classification \cite{BHPV} we have $b_1(M)=1$ and so $M$ is of class VII. In this case we have that $0=c_1^2(M)=-b_2(M)$, and it follows from results of Bogomolov \cite{Bo2}, Li-Yau-Zheng \cite{LYZ} and Teleman \cite{Te0} that $M$ is either a Hopf or an Inoue surface. But in both cases it is easy to see that $c_1^{\mathrm{BC}}(M)$ cannot be zero (see e.g. \cite[Remarks 4.2 and 4.3]{Te} and \cite[p.24]{TW2}, as well as Example \ref{exam2} for some special Hopf surfaces), and we are done.
Note that by the Kodaira-Enriques classification, compact complex surfaces with torsion canonical bundle are either K\"ahler or Kodaira surfaces.
\end{proof}

\begin{proof}[Proof of Theorem \ref{main}]
Since $M$ is in class $\mathcal{C}$, there is a modification $\mu:\ti{M}\to M$ with $\ti{M}$ a compact K\"ahler manifold. In fact, we may furthermore assume
that $\mu$ is a composition of blowups with smooth centers (see the proof of \cite[Theorem 2.2.16]{MM} for example).

Now if $\pi:X\to Y$ is the blowup of a complex manifold $Y$ along a complex submanifold $Z\subset Y$, with exceptional divisor $E\subset X$, then it is well-known that
$$K_X=\pi^*K_Y+(n-1-\dim Z)E.$$
From this it follows that in our case we have
$$K_{\ti{M}}=\mu^*K_M + \sum_E a_E E,$$
where the sum is over all $\mu$-exceptional divisors and the coefficients $a_E$ are nonnegative integers. If we denote by $F=\mu^*K_M^{-1}$, then $F$ is a holomorphic line bundle on $\ti{M}$ with $c_1^{\mathrm{BC}}(F)=0$, and $K_{\ti{M}}\otimes F$ is effective, i.e. $H^0(\ti{M}, K_{\ti{M}}\otimes F)\neq 0$.

The goal now is to show that this implies $\kappa(M)\geq 0$, since we can then apply Theorem \ref{old} (a), and finish the proof. We follow a strategy from \cite{CP}.

We need a recent result of Wang \cite{Wa} which extends a classical theorem of Simpson \cite{Si} from the projective to the K\"ahler case. To state this, define the cohomology jump loci
$$S^i_m=\{L\in \mathrm{Pic}^0(\ti{M})\ |\ h^i(\ti{M},L)\geq m\}.$$
The theorem of Wang \cite{Wa} (in the projective case due to Simpson \cite{Si}) says that for each value of $i,m$, the locus $S^i_m$ is a finite union of torsion translates of subtori of $\mathrm{Pic}^0(\ti{M})$.
In particular, we have
$$S:=S^n_1=\bigcup_{i=1}^N \{A_i+T_i\},$$
where each $A_i$ is a holomorphically torsion line bundle on $\ti{M}$, and each $T_i$ is a subtorus  of $\mathrm{Pic}^0(\ti{M})$. Thanks to Serre duality, we also have
$$S=\{L\in \mathrm{Pic}^0(\ti{M})\ |\ H^0(\ti{M},K_{\ti{M}}\otimes L^*)\neq 0\}.$$
If we let $S^*=\{L^*\ |\ L\in S\}$, then we also have that $S^*=\cup_{i=1}^N \{-A_i+T_i\}$.

By what we proved above, we have that $F\in S^*$, and so we can write $F=-A_i+F_1$ for some $A_i$ torsion and some $F_1\in T_i$. But we also have that
$F_1=F_2-A_j$ for some $A_j$ torsion and $F_2\in S$. We get $F=-A_i-A_j+F_2$, and so
$$H^0(\ti{M},K_{\ti{M}}\otimes F^*\otimes A_i^*\otimes A_j^*)\neq 0.$$
If $\ell\geq 1$ is such that $ A_i^\ell\otimes A_j^\ell$ is trivial, then
$$H^0(\ti{M},K_{\ti{M}}^\ell\otimes (F^*)^\ell)\neq 0.$$
But thanks to $H^0(\ti{M}, K_{\ti{M}}\otimes F)\neq 0$ we also have that
$$H^0(\ti{M},K_{\ti{M}}^\ell\otimes F^\ell)\neq 0,$$
and using the tensor product map
$$H^0(\ti{M},K_{\ti{M}}^\ell\otimes F^\ell)\otimes H^0(\ti{M},K_{\ti{M}}^\ell\otimes (F^*)^\ell)\to H^0(\ti{M},K_{\ti{M}}^{2\ell}),$$
we conclude that $ H^0(\ti{M},K_{\ti{M}}^{2\ell})\neq 0$. By the birational invariance of plurigenera, we also have $H^0(M,K_{M}^{2\ell})\neq 0$, which implies $\kappa(M)\geq 0$, as needed.
\end{proof}

\begin{proof}[Proof of Proposition \ref{deform}]
It is an elementary and well-known fact that for every compact complex manifold $b_1(M)\leq 2 h^{0,1}(M)$ with equality if and only if the $\de\db$-Lemma holds for $(1,1)$-forms (i.e. every $d$-exact $(1,1)$-form is $\de\db$-exact). Indeed, the map that associates to a real $1$-form its $(0,1)$ part induces an injection $H^1(M,\mathbb{R})\hookrightarrow H^1(M,\mathcal{O})$, whence $b_1(M)\leq 2 h^{0,1}(M)$, and then the map that associates to a $(0,1)$-form $a$ the $(1,1)$-form $\de a+\ov{\de a}$ induces an isomorphism
$$\frac{H^1(M,\mathcal{O})}{H^1(M,\mathbb{R})}\cong \frac{\{d\textrm{-exact real }(1,1)\textrm{-forms}\}}{\{\ddbar \psi, \psi\in C^\infty(M,\mathbb{R})\}}.$$
As a remark, a cohomological characterization of when the $\de\db$-Lemma holds for {\em all} $(p,q)$-forms was recently given in \cite{AT}.

Now the Betti number $b_1(M)$ is constant under small deformations, while the Hodge number $h^{0,1}(M)$ is upper semicontinuous. It follows that for $t$ sufficiently small we have
$$2h^{0,1}(M_t)\geq b_1(M_t)=b_1(M)=2h^{0,1}(M)\geq 2h^{0,1}(M_t),$$
and so the $\de\db$-Lemma holds for $(1,1)$-forms on $M_t$. The assumption that $c_1^{\mathrm{BC}}(M)=0$ implies that
$c_1(M)=0$ in $H^2(M,\mathbb{R})$ and so there is some $\ell\in \mathbb{N}$ such that
$\ell c_1(M)=0$ in $H^2(M,\mathbb{Z})$. This topological condition is preserved for small $t$,
so $c_1(M_t)=0$ in $H^2(M_t,\mathbb{R})$ and by the $\de\db$-Lemma we see that
$c_1^{\mathrm{BC}}(M_t)=0$.
\end{proof}

\begin{remark}
In fact, the proof of Proposition \ref{deform} shows that if $c_1^{\mathrm{BC}}(M)=0$ and for every $t\neq 0$ sufficiently small we have $b_1(M_t)=2h^{0,1}(M_t)$, then we have $c_1^{\mathrm{BC}}(M_t)=0$, even if $M$ did not satisfy the $\de\db$-Lemma for $(1,1)$ forms. An example where this phenomenon occurs is a complex parallelizable solvmanifold $M$ constructed by Nakamura \cite[Example III-(3b)]{Na}, which is also referred to as the parallelizable Nakamura manifold in \cite[Section 4]{AK}. This manifold has $b_1(M)=2, h^{0,1}(M)=3$ \cite{Na}, and therefore it does not satisfy the $\de\db$-Lemma. However, it admits small deformations $M_t$ with $h^{0,1}(M_t)=1$, which therefore do satisfy the $\de\db$-Lemma for $(1,1)$ forms (these are the ``case (1)'' deformations in \cite{AK}). It follows that $c_1^{\mathrm{BC}}(M_t)=0$.

Interestingly, this manifold has also other small deformations $N_t$ which do not satisfy the $\de\db$-Lemma for $(1,1)$ forms but still have $c_1^{\mathrm{BC}}(N_t)=0$. We will discuss them below in Example \ref{exam4}.
\end{remark}
\section{Examples}\label{sectex}

In this section we give some examples that elucidate the relations between the three conditions (1) $c_1^{\mathrm{BC}}(M)=0$, (2) $c_1(M)=0$ in $H^2(M,\mathbb{R})$ and (3) $K_M$ is holomorphically torsion.

\begin{example}\label{exam} Following Magnusson \cite{Mg}, we give examples of non-K\"ahler compact complex manifolds with vanishing first Bott-Chern class, whose canonical bundle is nevertheless not holomorphically torsion.

Let $X^n$ be a compact K\"ahler manifold with trivial canonical bundle and with an automorphism $f$ such that the induced automorphism on
$H^0(X, K_X)$ $\cong\mathbb{C}$ has infinite order. This implies that if $\Omega$ is a never-vanishing holomorphic $n$-form on $X$ then $f^*\Omega=\lambda\Omega$ with
$|\lambda|=1$ but $\lambda$ not a root of unity.
An example of such $X,f$ with $X$ a $2$-dimensional complex torus, due to Yoshihara \cite{Yo2}, is described in \cite[Example 6.4]{Ue2}.
There are also examples with $X$ a $K3$ surface, due to McMullen \cite{Mc}, and an explicit example with $X$ a $3$-dimensional complex torus, due to Iitaka \cite[Remark 14.6]{Ue}.

We will now describe Yoshihara's example in detail. Let $\alpha,\beta$ be the two roots of the equation
$$x^2-(1+i)x+1=0.$$
Then clearly $\alpha\beta=1$, but on the other hand $\alpha\ov{\beta}$ is not a root of unity. The minimal polynomial over $\mathbb{Q}$ of $\alpha$ (and $\ov{\beta}$) is
$$x^4-2x^3+4x^2-2x+1.$$
Let $\Lambda$ be the lattice in $\mathbb{C}^2$ spanned by the vectors
$(\alpha^j,\ov{\beta}^j)$, $j=0,\dots,3$, and $X=\mathbb{C}^2/\Lambda$. The automorphism of $\mathbb{C}^2$ given by multiplication by
\[\begin{pmatrix}
\alpha & 0  \\
0 & \ov{\beta}
\end{pmatrix}\]
descends to an automorphism $f$ of $X$, since $$(\alpha^4,\ov{\beta}^4)=2(\alpha^3,\ov{\beta}^3)-4(\alpha^2,\ov{\beta}^2)+2(\alpha,\ov{\beta})-(1,1).$$
The canonical holomorphic $2$ form $\Omega=dz^1\wedge dz^2$ on $X$ satisfies
$f^*\Omega=\alpha\ov{\beta}\Omega.$
The complex number $\alpha\ov{\beta}$ is not a root of unity, and hence the action of $f$ on $H^0(X,K_X)$ has infinite order.

Given any such $X,f$, one can then construct a holomorphic fiber bundle $M\to C$ with fiber $X$ where $C$ is an elliptic curve. In particular, if $X$ is
the above example then $M$ is a $3$-fold.
This construction has appeared many times in the literature
(see e.g. \cite[Remark 15.3]{Ue}, \cite[Example, p.15]{Bo}, \cite[p.248]{Fu}, \cite[Example 2.4]{AMN}, \cite[p.491]{Mj}), and is called ``suspension'' in \cite{AMN}. Namely, write $C=\mathbb{C}/(\mathbb{Z}\oplus\mathbb{Z}\tau)$, and define a holomorphic free $\mathbb{Z}^2$-action on $X\times\mathbb{C}$ by
$$(1,0)\cdot(x,z)=(x,z+1),\quad (0,1)\cdot(x,z)=(f(x),z+\tau).$$
The quotient is our manifold $M$, which fibers onto $C$. $M$ cannot be K\"ahler because the image of the monodromy map $\pi_1(C)\to \mathrm{Aut}H^n(X,\mathbb{R})$ contains an element of infinite order, violating \cite[Corollary 4.10]{Fu}.

Following \cite{Mg, Ue} we show that
$\kappa(M)=-\infty$ (so in particular $K_M$ cannot be torsion). Indeed, if we had a nontrivial section $s\in H^0(M,K_M^\ell)$ for some $\ell\geq 1,$ its pullback to $X\times\mathbb{C}$
would be of the form $F\cdot(\Omega\wedge dz)^\ell$ where $dz$ is the standard holomorphic $1$-form on $\mathbb{C}$ and $F$ is a holomorphic function on $X\times\mathbb{C}$. Since $X$ is compact, $F$ depends only on $z\in \mathbb{C}$. The pullback is invariant under the $\mathbb{Z}^2$-action, hence
$$F(z)\cdot(\Omega\wedge dz)^\ell=F(z+1)\cdot(\Omega\wedge dz)^\ell=\lambda^\ell F(z+\tau)\cdot(\Omega\wedge dz)^\ell,$$
and so $F(z)=F(z+1)=\lambda^\ell F(z+\tau)$. Therefore
$|F(z)|=|F(z+1)|=|F(z+\tau)|$ and so $F$ is constant by the maximum modulus principle. This implies that $\lambda^\ell=1$ which contradicts the fact that $\lambda$ is not a root of unity.

Finally, Magnusson \cite{Mg} shows that $c_1^{\mathrm{BC}}(M)=0$. To see this, fix a Ricci-flat K\"ahler metric $\omega$ on $X$. We have that $f^*\omega^n=\omega^n,$
because both $\omega^n$ and $f^*\omega^n$ are Ricci-flat volume forms with the same integral. Consider now
the volume form $\omega^n\wedge\omega_E$ on $X\times\mathbb{C}$, where $\omega_E$ is the standard Euclidean metric on $\mathbb{C}$. It is Ricci-flat and invariant under the $\mathbb{Z}^2$-action, and therefore it descends to a Ricci-flat volume form on
$M$, i.e. a flat Hermitian metric on $K_M$.
\end{example}

\begin{example}\label{exam4}
We now discuss other examples of non-K\"ahler compact complex manifolds with vanishing first Bott-Chern class and with canonical bundle not holomorphically torsion.

The complex parallelizable solvmanifold $M$ constructed by Nakamura \cite[Example III-(3b)]{Na} (see also \cite[Section 4]{AK}), has a family of small deformations $N_t$ which
have $b_1(N_t)=h^{0,1}(N_t)=2$ (``case (2)'' in \cite{AK}) and therefore do not satisfy the $\de\db$-Lemma. They also have Kodaira dimension $\kappa(N_t)=-\infty$ \cite{Na}. Nevertheless, these manifolds also satisfy $c_1^{\mathrm{BC}}(N_t)=0$. This can be seen by using some computations in \cite[Table 4]{AK}: we have that $M=\mathbb{C}^3/\Gamma$ for a certain discrete subgroup $\Gamma$, and using the standard coordinates on $\mathbb{C}^3$ we can write the complex structure on $N_t$ as given by the infinitesimal deformation vector $t\de_1\otimes e^{z_1}d\ov{z}_3\in H^{0,1}(M,TM),$ where $t$ is a small complex number.
The following $1$-forms give well-defined linearly independent $(1,0)$-forms on $N_t$:
$$\theta_1=dz_1-te^{z_1}d\ov{z}_3,\quad \theta_2=e^{-z_1}dz_2, \quad \theta_3=e^{z_1}dz_3.$$
Then $\omega=\mn \theta_1\wedge\ov{\theta_1}+\mn \theta_2\wedge\ov{\theta_2}+\mn \theta_3\wedge\ov{\theta_3}$ defines a Hermitian metric on $N_t$. We have
$$\omega^3=6 (\mn)^3 dz_1\wedge d\ov{z}_1\wedge dz_2\wedge d\ov{z}_2\wedge dz_3\wedge d\ov{z}_3,$$
which in local holomorphic coordinates on $N_t$ is a constant times the Euclidean volume form,
and hence $\Ric(\omega)=0$ on $N_t$.
\end{example}

\begin{example}\label{exam2}
We now give some examples of non-K\"ahler compact complex manifolds with $c_1(M)=0$ in $H^2(M,\mathbb{R})$ but with
$c_1^{\mathrm{BC}}(M)\neq 0$. We start with the following simple observation: if $M$ is a compact complex manifold that admits a Hermitian metric $\omega$ with $\Ric(\omega)\geq 0$ but not identically zero, then $c_1^{\mathrm{BC}}(M)\neq 0$. Indeed if we had $c_1^{\mathrm{BC}}(M)=0$ then there would be a smooth function $F$ with $\Ric(\omega)=\ddbar F\geq 0,$ which implies that $F$ must be constant and so $\Ric(\omega)=0$ which we assumed is not the case.

We apply this observation to the Hopf manifold $M=(\mathbb{C}^n\backslash\{0\})/\sim$, $n\geq 2$, where we identify
$(z_1\dots,z_n)\sim(\alpha_1z_1,\dots,\alpha_nz_n)$, and the nonzero complex numbers $\alpha_j$ all have the same modulus which is different from $1$. The complex manifold $M$ is diffeomorphic to $S^1\times S^{2n-1}$, so that $b_2(M)=0$. We consider the Hermitian metric on $M$ given by
$$\omega=\frac{\delta_{ij}}{|z|^2}\mn dz_i\wedge d\ov{z}_j.$$
A simple calculation (see e.g. \cite[p.28]{TW2}) shows that
$$\Ric(\omega)=\frac{n}{|z|^2}\left(\delta_{ij}-\frac{\ov{z}_i z_j}{|z|^2}\right)\mn dz_i\wedge d\ov{z}_j,$$
which is clearly not identically zero and is semipositive definite by the Cauchy-Schwarz inequality.
It follows that $c_1(M)=0$ in $H^2(M,\mathbb{R})$, yet $c_1^{\mathrm{BC}}(M)\neq 0$.
\end{example}

\begin{example}\label{exam5}
The following example was suggested by the referee. This is again a non-K\"ahler compact complex manifold with $c_1(M)=0$ in $H^2(M,\mathbb{R})$ but with $c_1^{\mathrm{BC}}(M)\neq 0$, obtained as a principal torus
bundle over a compact Riemann surface $\Sigma$ of genus $g\geq 2$. Let $T=\mathbb{C}^n/\Lambda$ be an $n$-dimensional complex torus, and $\pi:M\to\Sigma$ be any topologically nontrivial principal $T$-bundle over $\Sigma$.
Then the defining cocycle of the bundle determines a characteristic class $c\in H^2(\Sigma,\mathbb{Z})\otimes\Lambda$, which can be viewed as a map $\delta:H^1(T,\mathbb{Z})\to H^2(\Sigma,\mathbb{Z})$, and this vanishes precisely
when $M$ is topologically trivial. Therefore in our case $\delta$ is surjective, and furthermore $M$ is non-K\"ahler (see e.g. \cite[Theorem 1.7]{Ho}). In general we have that $K_M=\pi^*K_\Sigma$, and so $\kappa(M)=1$, and $c_1(M)=\pi^*c_1(\Sigma)$ vanishes in $H^2(M,\mathbb{Z})$ iff $c_1(\Sigma)$ is in the image of $\delta$. In our case this holds, so $c_1(M)=0$ in $H^2(M,\mathbb{R})$. If we had $c_1^{\mathrm{BC}}(M)=0$, then
we would get a contradiction from Theorem \ref{old} (a), because $\kappa(M)=1>0$ and $K_M$ is not holomorphically torsion.
\end{example}

\begin{example}\label{exam3}
Here we consider a compact complex manifold $M$ diffeomorphic to the six-sphere $S^6$, assuming one exists. Of course it is a well-known open problem to determine whether such a manifold $M$ exists.

Obviously we have $c_1(M)=0$ in $H^2(M,\mathbb{R})$, and we now show that $c_1^{\mathrm{BC}}(M)\neq 0$. Indeed, if we had $c_1^{\mathrm{BC}}(M)=0$ then we would have that $K_M$ is holomorphically torsion thanks to Theorem \ref{old} (b). However, the exponential exact sequence together with $H^1(M,\mathbb{Z})=H^3(M,\mathbb{Z})=0$ imply that $\mathrm{Pic}(M)\cong H^1(M,\mathcal{O}_M)$, which has no torsion. Therefore $K_M$ is trivial, and there is a never-vanishing holomorphic $3$-form $\Omega$. This form is clearly $d$-closed, and hence $d$-exact, $\Omega=d\beta$. Then
$$0<(\mn)^{n^2}\int_M \Omega\wedge\ov{\Omega}=(\mn)^{n^2}\int_M d(\beta\wedge d\ov{\beta})=0,$$
a contradiction. The last part of this argument comes from \cite{Gr}.
\end{example}

\section{Canonical metrics on non-K\"ahler Calabi-Yau manifolds}\label{sectbal}
A K\"ahler Calabi-Yau manifold admits Ricci-flat K\"ahler metrics, exactly one in each K\"ahler class \cite{Ya1}. These canonical metrics have proved extremely useful in the study of the geometry of Calabi-Yau manifolds. It is natural to ask whether an analog of such metrics exists on non-K\"ahler Calabi-Yau manifolds.

We do not have a satisfactory general answer to this question, but there are several possible approaches. First of all, it is clear from Theorem \ref{metrics} that Chern-Ricci flat metrics (which always exist on manifolds with vanishing first Bott-Chern class) are not canonical in any reasonable sense: there are simply too many of them, because every Hermitian metric is conformal to a Chern-Ricci flat metric. However, if we restrict to Hermitian metrics which satisfy additional hypotheses, there is some hope to construct suitable canonical metrics.

For example, let $M$ be a compact complex manifold with $c_1^{\mathrm{BC}}(M)= 0$, which admits a {\em balanced} Hermitian metric $\omega$, which by definition satisfies
$d(\omega^{n-1})=0$, see \cite{Mi}. Is it possible to find another balanced metric on $M$ which is Chern-Ricci flat? For emphasis, we state this as a conjecture (which is part of the folklore of this subject, see e.g. \cite{F, FX, FWW, Po}).

\begin{conj}\label{balanced}
Let $M$ be a compact complex manifold with $c_1^{\mathrm{BC}}(M)= 0$ and with a balanced metric $\omega$. Then there is a balanced metric $\ti{\omega}$ with $[\ti{\omega}^{n-1}]=[\omega^{n-1}]$ in $H^{2n-2}(M,\mathbb{R})$ with $\Ric(\ti{\omega})=0$.
\end{conj}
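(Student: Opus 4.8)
The plan is to produce $\ti{\omega}$ as the solution of a fully nonlinear Monge-Amp\`ere type equation for $(n-1,n-1)$-forms, following Fu-Wang-Wu \cite{FWW,FWW2} and refining Theorem \ref{metrics}~(3) so as to keep track of closedness. By Michelsohn's correspondence \cite{Mi}, the map $\chi\mapsto\chi^{n-1}$ identifies Hermitian metrics with the strictly positive $(n-1,n-1)$-forms of the appropriate algebraic type, and under it the balanced metrics correspond exactly to the $d$-closed ones; so it suffices to produce a $d$-closed strictly positive $(n-1,n-1)$-form $\Psi$ with $[\Psi]=[\omega^{n-1}]$ in $H^{2n-2}(M,\mathbb{R})$ whose associated metric is Chern-Ricci flat. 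The key point is that for a \emph{scalar} function $\vp$ the form $\ddbar(\vp\,\omega^{n-2})$ is automatically both $d$-closed and $d$-exact (because $d(\ddbar\beta)=0$ and $\ddbar\beta$ is $d$-exact for every form $\beta$), in contrast to the form $\ddbar\vp\wedge\omega_0^{n-2}$ of Theorem \ref{metrics}~(3), which is $d$-closed only when $d(\omega_0^{n-2})=0$, e.g.\ when $\omega_0$ is K\"ahler. I would therefore look for $\ti{\omega}$ of the shape
$$\ti{\omega}^{n-1}=\omega^{n-1}+\ddbar(\vp\,\omega^{n-2}),$$
with $\vp$ ranging over the open cone of functions making the right-hand side strictly positive; any such $\ti{\omega}$ is then automatically balanced with $[\ti{\omega}^{n-1}]=[\omega^{n-1}]$. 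Using $c_1^{\mathrm{BC}}(M)=0$ to write $\Ric(\omega)=\ddbar F$, the requirement $\Ric(\ti{\omega})=0$ becomes, exactly as in the proof of Theorem \ref{metrics}, the equation $\ti{\omega}^n=e^{F+b}\omega^n$, where the constant $b$ is determined together with $\vp$. This is a fully nonlinear elliptic equation of Monge-Amp\`ere type for $\vp$ --- the ``form-type Calabi-Yau equation'' of \cite{FWW} --- of which Theorem \ref{metrics}~(3) solves the simpler variant with perturbation $\ddbar\vp\wedge\omega_0^{n-2}$, for a fixed auxiliary Hermitian $\omega_0$, in place of $\ddbar(\vp\,\omega^{n-2})$; that variant yields a Chern-Ricci flat metric but a balanced one only when $\omega_0$ can be chosen K\"ahler, which is exactly what one cannot do on a general balanced manifold.

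I would solve the form-type Calabi-Yau equation by the continuity method, replacing $F$ by $tF$ for $t\in[0,1]$: at $t=0$ the pair $\vp=0$, $b=0$ solves it, and one argues that the set of $t\in[0,1]$ for which a solution exists is open and closed. Openness would follow from the implicit function theorem in H\"older spaces, the linearization being a second-order elliptic operator of Laplacian type for $\dot\vp$ whose one-dimensional cokernel is absorbed by the free constant $b$, much as in \cite[Theorem 1.1]{TW5} (but see the caveat below). Closedness requires a priori $C^\infty$ bounds on $\vp$ uniform in $t$, which by the Evans-Krylov and Schauder theories reduce to a uniform $C^0$ bound on $(\vp,b)$, after which the gradient and second-order estimates should follow from maximum-principle arguments on a quantity of the form $\log\tr{\omega}{\ti{\omega}}-A\vp$ as in \cite{TW,TW4,TW5}; these bounds moreover confine $\vp$ to a compact subset of the admissible cone, keeping $\ti{\omega}$ uniformly positive.

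The essential new difficulty --- the reason this is not already contained in \cite{TW5} --- is the $C^0$ estimate. Expanding
$$\ddbar(\vp\,\omega^{n-2})=\ddbar\vp\wedge\omega^{n-2}+(\textrm{terms of first order in }\vp)+\vp\,\ddbar(\omega^{n-2}),$$
one sees that the equation acquires genuine first-order terms of the type $\nabla\vp\ast(\textrm{torsion of }\omega)$ --- these vanish when $\omega$ is K\"ahler and are otherwise manageable, being handled just as in the Hermitian complex Monge-Amp\`ere equation \cite{TW} --- and, more seriously, a genuine \emph{zeroth-order} term $\vp\,\ddbar(\omega^{n-2})$, which vanishes only when $\omega$ is astheno-K\"ahler. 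This term destroys the usual maximum-principle proof of the $C^0$ estimate, since at an extremum of $\vp$ one no longer controls the sign of the relevant expression, and it is also what introduces a zeroth-order coefficient into the linearization, which is the caveat to the openness step above. I expect controlling this term to be the crux of the whole argument: one would want either an integral ($L^p$ / Moser iteration) estimate in the spirit of \cite{TW} that absorbs $\vp\,\ddbar(\omega^{n-2})$, or an a priori structural hypothesis --- a sign, or smallness relative to $\omega^{n-2}$ --- on $\ddbar(\omega^{n-2})$. Granting the uniform estimates, closedness holds, the continuity method produces a solution $(\vp,b)$ at $t=1$, and the associated metric $\ti{\omega}$ is then balanced, satisfies $[\ti{\omega}^{n-1}]=[\omega^{n-1}]$, and has $\Ric(\ti{\omega})=0$, as required. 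A parabolic approach --- evolving $\omega^{n-1}$ by a flow designed to preserve both balancedness and the class $[\omega^{n-1}]$ --- is equally conceivable, but would run into the same zeroth-order term as its main analytic obstacle.
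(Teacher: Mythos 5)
You have not proved the statement, and indeed you could not have: in the paper this is stated as Conjecture \ref{balanced}, an open problem, not a theorem. What you describe is essentially the paper's own proposed line of attack (the ``form-type Calabi-Yau equation'' of Fu--Wang--Wu, stated in the paper as Conjecture \ref{solvebalanced}): seek $\ti{\omega}^{n-1}=\omega^{n-1}+\ddbar(u\,\omega_0^{n-2})$ solving $\ti{\omega}^n=e^{F+b}\omega^n$, the point being exactly the one you make, that $\ddbar(u\,\omega_0^{n-2})$ is automatically $d$-closed and $d$-exact, so balancedness and the class $[\omega^{n-1}]$ are preserved. The only cases in which this equation is actually known to be solvable are those recorded in the paper: when $\omega_0$ is K\"ahler (Theorem \ref{tw}, proved in \cite{TW4}, which settles Conjecture \ref{balanced} for $M$ K\"ahler, since then $\ddbar(u\,\omega_0^{n-2})=\ddbar u\wedge\omega_0^{n-2}$), and, conditionally, when $\omega_0$ is astheno-K\"ahler, where the extra terms are of the form $2\mathrm{Re}(\mn\,\de u\wedge\db(\omega_0^{n-2}))$ and the problem is reduced by \cite[Theorem 1.7]{TW5} to a second-order a priori estimate. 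Your choice $\omega_0=\omega$ does not fall into either case in general.

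The gap in your argument is therefore exactly the part you flag yourself and then ``grant'': the uniform a priori estimates in the presence of the first-order terms $\de u\wedge\db(\omega^{n-2})$, $\db u\wedge\de(\omega^{n-2})$ and especially the zeroth-order term $u\,\ddbar(\omega^{n-2})$. No known technique handles the zeroth-order term (it also spoils the clean openness argument, as you note), and even when it vanishes (astheno-K\"ahler $\omega_0$) the paper locates the missing ingredient at the level of the second-order estimate rather than the $C^0$ estimate, so your assessment of where the difficulty sits is only partly aligned with what is known. In short: your setup is correct and matches the intended strategy, but the analytic core you defer is precisely the open content of the conjecture, so the proposal is a programme, not a proof.
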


There are many examples of such manifolds. Indeed, any manifold in class $\mathcal{C}$ is balanced by \cite{AB}, and there are many with $c_1(M)=0$ (see the discussion in the Introduction before Theorem \ref{main}). There are also examples not in class $\mathcal{C}$, which are diffeomorphic to connected sums $\sharp_{i=1}^N (S^3\times S^3), N>1$: these manifolds have trivial canonical bundle by \cite{Fr, LT} and admit balanced metrics by \cite{FLY}, but are not in class $\mathcal{C}$ since they have vanishing second Betti number. More examples of non-K\"ahler compact complex manifolds with trivial canonical bundle and which admit balanced metrics were constructed in \cite{GP, FY}.

It is interesting to note that if $\omega$ is a balanced metric with $\Ric(\omega)=0$, then the Bismut connection \cite{Bi} of $\omega$ (also known as the connection with skew symmetric torsion) has vanishing Ricci curvature as well, which means that its restricted holonomy is also contained in $SU(n)$. This condition has been much studied in the mathematical physics literature \cite{AI, FG, Fr3, Gr, GIP, IP, Ph}, and Conjecture \ref{balanced} would provide many more examples of such special metrics.

An approach to Conjecture \ref{balanced} was proposed in \cite{FWW}. Given a balanced metric $\omega$ on $M$, the condition $c_1^{\mathrm{BC}}(M)= 0$ implies that $\Ric(\omega)=\ddbar F$ for some smooth function $F$. Then we seek a new balanced metric $\ti{\omega}$ such that $\ti{\omega}^{n-1}=\omega^{n-1}+\ddbar (u\omega_0^{n-2})$ for some smooth function $u$ and some Hermitian metric $\omega_0$ on $M$, with $\ti{\omega}$ solving the Monge-Amp\`ere equation
\begin{equation}\label{balma}
\ti{\omega}^n=e^{F+b}\omega^n,
\end{equation}
for some constant $b$. This is called a ``form-type Calabi-Yau equation'' in \cite{FWW}. Regarding the solvability of \eqref{balma} we have the following conjecture:

\begin{conj}\label{solvebalanced}
Let $M$ be a compact complex manifold with a balanced metric $\omega$, a Hermitian metric $\omega_0$, a smooth function $F$.
Then there are a constant $b$ and a balanced metric $\ti{\omega}$ with $\ti{\omega}^{n-1}=\omega^{n-1}+\ddbar (u\omega_0^{n-2})$ for some smooth function $u$, such that
\eqref{balma} holds.
\end{conj}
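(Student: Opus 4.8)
The plan is to solve \eqref{balma} by the continuity method, adapting the treatment of the form-type equation in \cite{TW4, TW5} and regarding the non-closedness of $\omega_0$ as a lower-order perturbation. Expanding,
$$\ddbar(u\,\omega_0^{n-2}) = \ddbar u\wedge\omega_0^{n-2} + \mn\,\de u\wedge\db(\omega_0^{n-2}) - \mn\,\db u\wedge\de(\omega_0^{n-2}) + u\,\ddbar(\omega_0^{n-2}),$$
so the prescribed $(n-1,n-1)$-form becomes $\Psi_u:=\omega^{n-1}+\ddbar u\wedge\omega_0^{n-2}+\Xi_u+u\,\Theta_0$, where $\Xi_u$ is a real $(n-1,n-1)$-form depending linearly on the first derivatives of $u$ and $\Theta_0:=\ddbar(\omega_0^{n-2})$ is a fixed real $(n-1,n-1)$-form; both $\Xi_u$ and $\Theta_0$ vanish identically when $\omega_0$ is K\"ahler. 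Using the pointwise bijection between positive $(1,1)$-forms and positive $(n-1,n-1)$-forms (see \cite{Mi,FWW,TW5}), $\Psi_u$ determines a Hermitian metric $\ti{\omega}(u)$ whenever $\Psi_u$ lies in the open cone of such positive $(n-1,n-1)$-forms, and $\ti{\omega}(u)$ is automatically balanced because $\omega$ is. The equation to solve is $\ti{\omega}(u)^n=e^{F+b}\omega^n$. Since for K\"ahler $\omega_0$ this reduces to a case of \cite[Theorem 1.1]{TW5} (which moreover yields a balanced metric, as $\omega$ is balanced), the genuinely new difficulty is caused entirely by the terms $\Xi_u$ and $u\,\Theta_0$.

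I would run the continuity path $\ti{\omega}(u_t)^n=e^{tF+b_t}\omega^n$, $t\in[0,1]$, starting from $u_0=0$, $b_0=0$. \emph{Openness} would follow from the implicit function theorem in H\"older spaces, provided the linearization --- a second-order operator elliptic with respect to $\ti{\omega}(u_t)$, plus first-order terms, plus a zeroth-order term coming from $u\,\Theta_0$ --- is invertible on functions modulo constants, with the constant $b$ filling the one-dimensional cokernel; as this operator satisfies the strong maximum principle, the Fredholm alternative reduces invertibility to the absence of a nontrivial kernel, which one hopes to rule out at an actual solution. Alternatively one may bypass openness via a Leray--Schauder degree argument, as in the Hermitian Monge--Amp\`ere case, which also handles the constant $b$. \emph{Closedness} requires a priori estimates uniform in $t$: (i) a bound on $b_t$ from integrating the equation; (ii) a $C^0$ bound for $u_t$, after normalizing $\sup_M u_t=0$; (iii) a $C^1$ bound for $u_t$; (iv) a second-order bound, via a maximum-principle computation on a test quantity such as $\tr{\omega}{\ti{\omega}(u_t)}$ or $e^{-\lambda u_t}\tr{\omega}{\ti{\omega}(u_t)}$, carefully collecting the torsion terms of $\omega$ and $\omega_0$; and (v) the passage to $C^{2,\alpha}$ by the Evans--Krylov theorem, using the concavity of the operator established in \cite{TW4,TW5}, followed by a Schauder bootstrap to $C^\infty$.

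The main obstacle I anticipate is the interaction of the zeroth-order term $u\,\Theta_0$ --- and, to a lesser extent, $\Xi_u$ --- with these estimates, since $\Theta_0=\ddbar(\omega_0^{n-2})$ has no sign in general. In the $C^0$ estimate this term can destroy the plain maximum principle, and I would attempt to control it either through an exponential substitution $u\mapsto e^{\lambda u}$ with a carefully chosen constant $\lambda$, or by carrying it through Moser iteration as a subcritical contribution, or by the more robust $L^\infty$ techniques based on auxiliary Monge--Amp\`ere equations, which tolerate lower-order terms. The same term obstructs the linearization (hence openness) and produces, in the second-order estimate, a term of the form $C\,\tr{\ti{\omega}(u_t)}{\omega}$ that must be absorbed; devising a single mechanism that tames it in all three places --- quite possibly one that exploits the balanced condition $d(\omega^{n-1})=0$ via an integral identity --- is, I believe, the crux of the problem. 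The first-order terms $\Xi_u$ should be harmless once a $C^1$ bound is in hand, although that bound may have to be extracted simultaneously with the second-order estimate, as is customary for complex Monge--Amp\`ere equations on non-K\"ahler manifolds.
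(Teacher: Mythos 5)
You were asked to prove a statement that the paper itself records as an open problem: Conjecture \ref{solvebalanced} is not proved in the paper, and your proposal does not prove it either. The parts of your setup that are definite are correct and coincide with the paper's framework: the expansion $\ddbar(u\,\omega_0^{n-2})=\ddbar u\wedge\omega_0^{n-2}+\mn\,\de u\wedge\db(\omega_0^{n-2})-\mn\,\db u\wedge\de(\omega_0^{n-2})+u\,\ddbar(\omega_0^{n-2})$ is the same one the paper uses, the remark that $\ti{\omega}$ is automatically balanced (because $d\omega^{n-1}=0$ and $d\ddbar=0$) is right, and the observation that for K\"ahler $\omega_0$ the equation \eqref{balma} becomes the Monge--Amp\`ere equation for $(n-1)$-plurisubharmonic functions solved in \cite{TW4} is precisely Theorem \ref{tw}.

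Beyond that, however, what you have written is a program, not a proof, and the gap is exactly where you say you ``anticipate obstacles.'' Openness is not established: the zeroth-order term $u\,\ddbar(\omega_0^{n-2})$ has no sign, so the linearized operator acquires a zeroth-order coefficient of indeterminate sign, the strong maximum principle argument you invoke does not apply as stated, and triviality of the kernel (and hence invertibility modulo constants, with $b$ absorbing the cokernel) is not proved. Likewise the $C^0$, first-order and second-order estimates in the presence of $\Xi_u$ and $u\,\ddbar(\omega_0^{n-2})$ are only sketched as possible techniques (exponential substitutions, Moser iteration, auxiliary equations), with no actual estimate carried out, and nothing rules out loss of positivity of the $(n-1,n-1)$-form along the continuity path. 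These are precisely the open points: the paper only records the case $\omega_0$ K\"ahler (Theorem \ref{tw}, via \cite{TW4}), and notes that if $\omega_0$ is astheno-K\"ahler, i.e. $\de\db(\omega_0^{n-2})=0$, then the zeroth-order term disappears, the equation falls into the class studied in \cite{TW5,Po2}, and the conjecture is \emph{reduced} --- not solved --- to a second-order estimate as in \cite[Theorem 1.7]{TW5}. Since your outline supplies no new mechanism for the general Hermitian $\omega_0$ (your suggestion to exploit $d(\omega^{n-1})=0$ through an integral identity is left entirely unexecuted), the statement remains, as in the paper, a conjecture.
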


Conjecture \ref{solvebalanced} implies Conjecture \ref{balanced}, since with the above choice of $F$ we can apply $\ddbar\log$ to \eqref{balma} and see that $\Ric(\ti{\omega})=\Ric(\omega)-\ddbar F=0$, and clearly $[\ti{\omega}^{n-1}]=[\omega^{n-1}]$ in $H^{2n-2}(M,\mathbb{R})$. We have the following theorem:

\begin{theorem}\label{tw}
Conjecture \ref{solvebalanced} holds if the metric $\omega_0$ is K\"ahler. Therefore Conjecture \ref{balanced} holds when $M$ is K\"ahler.
\end{theorem}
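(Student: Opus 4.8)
The plan is to reduce Conjecture \ref{solvebalanced}, in the case where $\omega_0$ is K\"ahler, to the Monge-Amp\`ere equation for $(n-1)$-plurisubharmonic functions solved by Weinkove and the author in \cite{TW5}. First I would observe that the ansatz $\ti{\omega}^{n-1}=\omega^{n-1}+\ddbar(u\,\omega_0^{n-2})$ does not quite match the form $\omega^{n-1}+\ddbar\vp\wedge\omega_0^{n-2}$ appearing in Theorem \ref{metrics}(3); however, since $\omega_0$ is K\"ahler, we have $\ddbar\omega_0^{n-2}=0$, and so for any smooth function $u$ the Leibniz rule gives
\begin{equation}\label{eq:leibniz}
\ddbar(u\,\omega_0^{n-2})=(\ddbar u)\wedge\omega_0^{n-2}.
\end{equation}
Thus with $\vp:=u$ the two ans\"atze coincide, and the problem becomes exactly that of finding a Hermitian metric $\ti{\omega}$ with $\ti{\omega}^{n-1}=\omega^{n-1}+\ddbar\vp\wedge\omega_0^{n-2}$ solving $\ti{\omega}^n=e^{F+b}\omega^n$. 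This is precisely the equation whose solvability (existence of $\vp$, unique up to a constant, and of the constant $b$) is asserted by \cite[Theorem 1.1]{TW5}; one only needs to check that the hypotheses there are met, namely that $\omega$ is a Hermitian metric (indeed balanced, hence Gauduchon, which is the relevant normalization assumption) and $\omega_0$ is a fixed Hermitian metric — here even K\"ahler, which is more than enough.

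Next I would verify that the metric $\ti{\omega}$ produced is genuinely balanced. By construction $\ti{\omega}^{n-1}=\omega^{n-1}+\ddbar\vp\wedge\omega_0^{n-2}$; applying $d=\de+\db$ and using that $\omega$ is balanced (so $d\omega^{n-1}=0$) together with $d\ddbar=0$ and $d\omega_0^{n-2}=0$, we get $d\ti{\omega}^{n-1}=0$, so $\ti{\omega}$ is balanced. (This is exactly the remark made in the Introduction after Theorem \ref{metrics}, that property (3) preserves the balanced condition when $d\omega_0=0$ and $d\omega^{n-1}=0$.) Moreover $[\ti{\omega}^{n-1}]=[\omega^{n-1}]$ in $H^{2n-2}(M,\mathbb{R})$ since the two differ by the $\ddbar$-exact, hence $d$-exact, form $\ddbar(\vp\,\omega_0^{n-2})$. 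This establishes Conjecture \ref{solvebalanced} when $\omega_0$ is K\"ahler.

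For the second assertion, suppose $M$ is K\"ahler. Then it carries a K\"ahler metric $\omega_0$, and moreover any balanced metric together with the remark above shows we may apply the first part with this K\"ahler $\omega_0$; so Conjecture \ref{solvebalanced}, and hence Conjecture \ref{balanced}, holds. Alternatively, and more simply, when $M$ is K\"ahler one can just take $\omega$ itself to be a K\"ahler metric in a fixed K\"ahler class: since $c_1^{\mathrm{BC}}(M)=0$ implies $c_1(M)=0$ in $H^2(M,\mathbb{R})$, the Calabi-Yau theorem \cite{Ya1} provides a Ricci-flat K\"ahler metric $\ti{\omega}$ in that class, which is in particular balanced and satisfies $\Ric(\ti{\omega})=0$ with $[\ti{\omega}^{n-1}]=[\omega^{n-1}]$; the implication ``Conjecture \ref{solvebalanced} $\Rightarrow$ Conjecture \ref{balanced}'' proved just above the theorem statement then shows one recovers the form-type equation as well. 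I would present the argument via \cite[Theorem 1.1]{TW5} as the main route since it covers the slightly more general balanced (not necessarily K\"ahler) $\omega$, and mention the Yau-theorem shortcut only for the purely K\"ahler conclusion.

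The only genuine obstacle is the first step: one must confirm that the K\"ahlerity of $\omega_0$ is exactly what makes the Leibniz identity \eqref{eq:leibniz} collapse $\ddbar(u\,\omega_0^{n-2})$ to $(\ddbar u)\wedge\omega_0^{n-2}$, since without $\ddbar\omega_0^{n-2}=0$ there would be extra cross terms $\de u\wedge\db\omega_0^{n-2}$ etc., and \cite{TW5} is stated for the form $\ddbar\vp\wedge\omega_0^{n-2}$ rather than $\ddbar(\vp\,\omega_0^{n-2})$ — these agree precisely under the K\"ahler hypothesis. Everything else is a routine matter of quoting \cite[Theorem 1.1]{TW5} and chasing the exactness statements; no new analytic estimates are required.
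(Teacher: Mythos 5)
Your main argument is correct and is essentially the paper's own proof: the K\"ahler hypothesis on $\omega_0$ collapses $\ddbar(u\,\omega_0^{n-2})$ to $\ddbar u\wedge\omega_0^{n-2}$, so \eqref{balma} becomes the Monge-Amp\`ere equation for $(n-1)$-plurisubharmonic functions; the paper quotes \cite{TW4} (which treats exactly the K\"ahler-$\omega_0$ case), while you quote the more general \cite[Theorem 1.1]{TW5} -- an immaterial difference, since the latter subsumes the former and is already used in the proof of Theorem \ref{metrics}~(3). Your verification that $\ti{\omega}$ is balanced and that $[\ti{\omega}^{n-1}]=[\omega^{n-1}]$, and the passage from Conjecture \ref{solvebalanced} to Conjecture \ref{balanced}, are exactly as in the paper.

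One caution: drop the ``alternative, simpler'' route via Yau's theorem. Conjecture \ref{balanced} on a K\"ahler manifold concerns an \emph{arbitrary} balanced metric $\omega$, and you cannot ``just take $\omega$ itself to be a K\"ahler metric in a fixed K\"ahler class'': in general the balanced class $[\omega^{n-1}]$ contains no $(n-1)$-st power of a K\"ahler metric (see \cite{FX}), which is precisely why the paper remarks that Conjecture \ref{balanced} does not trivially follow from \cite{Ya1} even when $M$ is K\"ahler. The Yau shortcut only handles balanced classes of the special form $[\omega^{n-1}]$ with $\omega$ K\"ahler, so it is not an alternative proof of the second assertion of Theorem \ref{tw}; the route through Conjecture \ref{solvebalanced} with K\"ahler $\omega_0$, which you correctly present as the main argument, is the one that works. (Minor aside: no Gauduchon-type normalization on $\omega$ is needed to invoke \cite[Theorem 1.1]{TW5}; the constant $b$ is determined as part of the solution.)
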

This theorem was proved by Weinkove and the author in \cite{TW4}. Indeed, since $\omega_0$ is K\"ahler, we see that
$$\omega^{n-1}+\ddbar (u\omega_0^{n-2})=\omega^{n-1}+\ddbar u\wedge\omega_0^{n-2},$$
and so \eqref{balma} becomes the ``Monge-Amp\`ere equation for $(n-1)$-plurisubharmonic functions'' solved in \cite{TW4}.
Note that even though in this case the manifold $M$ is K\"ahler, the Chern-Ricci flat balanced metrics $\ti{\omega}$ that we get are usually not K\"ahler (if $n\geq 3$).
Note also that when $M$ is K\"ahler, Conjecture \ref{balanced} does not trivially follow from Yau's Theorem \cite{Ya1}, since in general given a balanced metric $\omega$ there is no K\"ahler metric $\ti{\omega}$ with $[\ti{\omega}^{n-1}]=[\omega^{n-1}]$ (see \cite{FX}).

If instead of $d\omega_0=0$ we assume the astheno-K\"ahler condition $\de\db (\omega_0^{n-2})=0$ of Jost-Yau \cite{JY}, then we
have that
$$\omega^{n-1}+\ddbar (u\omega_0^{n-2})=\omega^{n-1}+\ddbar u\wedge\omega_0^{n-2}+2\mathrm{Re}(\mn \de u \wedge\db(\omega_0^{n-2})),$$
and \eqref{balma} falls into the class of equations studied in \cite{TW5, Po2} (the only difference is the factor of $2$ in front of $\mathrm{Re}(\mn \de u \wedge\db(\omega_0^{n-2})$,
which does not affect any of the results in \cite{TW5}). In particular, if $M$ admits astheno-K\"ahler metrics, then Conjectures \ref{balanced} and \ref{solvebalanced} are reduced to proving a suitable second order estimate for the solution $u$ (see \cite[Theorem 1.7]{TW5}).

Finally, let us also mention that another candidate for a class of special metrics on certain non-K\"ahler Calabi-Yau manifolds are solutions of Strominger's system \cite{St}. In general, solutions of such system are extremely hard to construct, see \cite{FY, LYa,Ph, TY} and references therein for more about this system.


\begin{thebibliography}{99}
\bibitem{AB} Alessandrini, L., Bassanelli, G. {\em The class of compact balanced manifolds is invariant under modifications}, in {\em Complex analysis and geometry (Trento, 1993)}, 1--17,
Lecture Notes in Pure and Appl. Math., 173, Dekker, New York, 1996.
\bibitem{AI} Alexandrov, B., Ivanov, S. {\em Vanishing theorems on Hermitian manifolds}, Differential Geom. Appl. {\bf 14} (2001), no. 3, 251--265.
\bibitem{AMN}  Amor\'os, J., Manjar\'in, M., Nicolau, M. {\em Deformations of K\"ahler manifolds with nonvanishing holomorphic vector fields}, J. Eur. Math. Soc. (JEMS) {\bf 14} (2012), no. 4, 997--1040.
\bibitem{AK} Angella, D., Kasuya, H. {\em Cohomologies of deformations of solvmanifolds and closedness of some properties}, preprint, arXiv:1305.6709.
\bibitem{AT} Angella, D., Tomassini, A. {\em On the $\de\db$-lemma and Bott-Chern cohomology}, Invent. Math. {\bf 192} (2013), no. 1, 71--81.
\bibitem{BDV} Barberis, M.L., Dotti, I.G., Verbitsky, M. {\em Canonical bundles of complex nilmanifolds, with applications to hypercomplex geometry}, Math. Res. Lett. {\bf 16} (2009), no. 2, 331--347.
\bibitem{BHPV} Barth, W.P., Hulek, K., Peters, C.A.M., Van de Ven, A. {\em Compact complex surfaces}, Springer, 2004.
\bibitem{Be} Beauville, A. {\em Vari\'et\'es K\"ahleriennes dont la premi\`ere classe de Chern est nulle}, J. Differential Geom. {\bf 18} (1983), no. 4, 755--782.
\bibitem{Bi} Bismut, J.-M. {\em A local index theorem for non-K\"ahler manifolds}, Math. Ann. {\bf 284} (1989), no. 4, 681--699.
\bibitem{Bo} Bogomolov, F.A. {\em K\"ahler manifolds with trivial canonical class}, Izv. Akad. Nauk SSSR Ser. Mat. {\bf 38} (1974), 11--21; English translation in Math. USSR Izv. {\bf 8} (1974), no. 1, 9--20.
\bibitem{Bo2}  Bogomolov, F.A. {\em Surfaces of class ${\rm VII}\sb{0}$ and affine geometry}, Izv. Akad. Nauk SSSR Ser. Mat. {\bf 46} (1982), no. 4, 710--761; English translation in Math. USSR Izv. {\bf 21} (1983), no. 1, 31--73.
\bibitem{Boz} Bozhkov, Y. {\em The specific Hermitian geometry of certain three-folds}, Riv. Mat. Univ. Parma (5) {\bf 4} (1995), 61--68.
\bibitem{Ca} Calabi, E. {\em On K\"ahler manifolds with vanishing canonical class}, in {\em Algebraic geometry and topology. A symposium in honor of S. Lefschetz},  pp. 78--89. Princeton University Press, Princeton, N. J., 1957.
\bibitem{CP} Campana, F., Peternell, T. {\em Geometric stability of the cotangent bundle and the universal cover of a projective manifold}, With an appendix by M. Toma, Bull. Soc. Math. France {\bf 139} (2011), no. 1, 41--74.
\bibitem{COP} Catanese, F., Oguiso, K., Peternell, T. {\em On volume-preserving complex structures on real tori}, Kyoto J. Math. {\bf 50} (2010), no. 4, 753--775.
\bibitem{Ch} Cherrier, P. {\em \'Equations de Monge-Amp\`ere sur les vari\'et\'es Hermitiennes compactes}, Bull. Sc. Math (2) {\bf 111} (1987), 343--385.
\bibitem{Cl} Clemens, C.H. {\em Double solids}, Adv. in Math. {\bf 47} (1983), no. 2, 107--230.
\bibitem{DGMS} Deligne, P., Griffiths, P., Morgan, J., Sullivan, D. {\em Real homotopy theory of K\"ahler manifolds},  Invent. Math. {\bf 29} (1975), no. 3, 245--274.
\bibitem{DLV} Di Scala, A.J., Lauret, J., Vezzoni, L. {\em Quasi-K\"ahler Chern-flat manifolds and complex $2$-step nilpotent Lie algebras},  Ann. Sc. Norm. Super. Pisa Cl. Sci. (5) {\bf 11} (2012), no. 1, 41--60.
\bibitem{FP} Fine, J., Panov, D. {\em Hyperbolic geometry and non-K\"ahler manifolds with trivial canonical bundle}, Geom. Topol. {\bf 14} (2010), no. 3, 1723--1763.
\bibitem{FG} Fino, A., Grantcharov, G. {\em Properties of manifolds with skew-symmetric torsion and special holonomy}, Adv. Math. {\bf 189} (2004), no. 2, 439--450.
\bibitem{Fr2} Friedman, R. {\em  Simultaneous resolution of threefold double points}, Math. Ann. {\bf 274} (1986), no. 4, 671--689.
\bibitem{Fr} Friedman, R. {\em On threefolds with trivial canonical bundle}, in {\em Complex geometry and Lie theory (Sundance, UT, 1989)}, 103--134, Proc. Sympos. Pure Math., 53, Amer. Math. Soc., Providence, RI, 1991.
\bibitem{Fr3} Friedrich, T. {\em Cocalibrated $G_2$-manifolds with Ricci flat characteristic connection}, preprint, arXiv:1303.7444.
\bibitem{F} Fu, J. {\em Specific non-K\"ahler Hermitian metrics on compact complex manifolds}, in {\em Recent Developments in Geometry and Analysis}, 79--90, Adv. Lect. Math. (ALM) {\bf 23}, International Press, 2013.
\bibitem{FLY} Fu, J., Li, J., Yau, S.-T. {\em Balanced metrics on non-K\"ahler Calabi-Yau threefolds},  J. Differential Geom. {\bf 90} (2012), no. 1, 81--129.
\bibitem{FWW} Fu, J., Wang, Z., Wu, D. {\em Form-type Calabi-Yau equations}, Math. Res. Lett. {\bf 17} (2010), no. 5, 887--903.
\bibitem{FWW2} Fu, J., Wang, Z., Wu, D. {\em Form-type Calabi-Yau equations on K\"ahler manifolds of nonnegative orthogonal bisectional curvature}, arXiv: 1010.2022.
\bibitem{FX} Fu, J., Xiao, J. {\em Relations between the K\"ahler cone and the balanced cone of a K\"ahler manifold}, preprint, arXiv:1203.2978.
\bibitem{FY} Fu, J., Yau, S.-T. {\em The theory of superstring with flux on non-K\"ahler manifolds and the complex Monge-Amp\`ere equation}, J. Differential Geom. {\bf 78} (2008), no. 3, 369--428.
\bibitem{Fu} Fujiki, A. {\em On automorphism groups of compact K\"ahler manifolds}, Inv. Math. {\bf 44} (1978), 225--258.
\bibitem{Ga} Gauduchon, P. {\em La classe de Chern pluriharmonique d'un fibr\'e en droites}, C. R. Acad. Sci. Paris S\'er. A-B {\bf 282} (1976), no. 9, Aii, A479--A482.
\bibitem{Gh} Ghys, \'E. {\em D\'eformations des structures complexes sur les espaces homog\`enes de $SL(2,\mathbb{C})$}, J. Reine Angew. Math. {\bf 468} (1995), 113--138.
\bibitem{Gi} Gill, M. {\em Convergence of the parabolic complex Monge-Amp\`ere equation on compact Hermitian manifolds}, Comm. Anal. Geom. {\bf 19} (2011), no. 2, 277--303.
\bibitem{GP}  Goldstein, E., Prokushkin, S. {\em Geometric model for complex non-K\"ahler manifolds with $SU(3)$ structure}, Comm. Math. Phys. {\bf 251} (2004), no. 1, 65--78.
\bibitem{Gra} Grantcharov, G., {\em Geometry of compact complex homogeneous spaces with vanishing first Chern class},  Adv. Math. {\bf 226} (2011), no. 4, 3136--3159.
\bibitem{GGP} Grantcharov, D., Grantcharov, G., Poon, Y.S. {\em Calabi-Yau connections with torsion on toric bundles}, J. Differential Geom. {\bf 78} (2008), no. 1, 13--32.
\bibitem{Gr} Gray, A. {\em A property of a hypothetical complex structure on the six sphere}, Boll. Un. Mat. Ital. B (7) {\bf 11} (1997), no. 2, suppl., 251--255.
\bibitem{GHJ} Gross, M., Huybrechts, D., Joyce, D. {\em Calabi-Yau manifolds and related geometries}, Lectures from the Summer School held in Nordfjordeid, June 2001. Universitext. Springer-Verlag, Berlin, 2003.
\bibitem{Gu} Guan, D. {\em Examples of compact holomorphic symplectic manifolds which are not K\"ahlerian. II}, Invent. Math. {\bf 121} (1995), no. 1, 135--145.
\bibitem{GL} Guan, B., Li, Q. {\em Complex Monge-Amp\`ere equations and totally real submanifolds}, Adv. Math. {\bf 225} (2010), no. 3, 1185--1223.
\bibitem{GIP} Gutowski, J., Ivanov, S., Papadopoulos, G. {\em Deformations of generalized calibrations and compact non-K\"ahler manifolds with vanishing first Chern class}, Asian J. Math. {\bf 7} (2003), no. 1, 39--79.
\bibitem{Hi} Hirzebruch, F. {\em Some examples of threefolds with trivial canonical bundle}, Collected Papers II (no. 75),  757--770, Springer-Verlag, Heidelberg, 1987.
\bibitem{Ho} H\"ofer, T. {\em Remarks on torus principal bundles},  J. Math. Kyoto Univ. {\bf 33} (1993), no. 1, 227--259.
\bibitem{IP} Ivanov, S., Papadopoulos, G. {\em Vanishing theorems and string backgrounds}, Classical Quantum Gravity {\bf 18} (2001), no. 6, 1089--1110.
\bibitem{JY}  Jost, J., Yau, S.-T. {\em A nonlinear elliptic system for maps from Hermitian to Riemannian manifolds and rigidity theorems in Hermitian geometry}, Acta Math. {\bf 170} (1993), no. 2, 221--254; Correction, Acta Math. {\bf 173} (1994), no. 2, 307.
\bibitem{Jo} Joyce, D. {\em Compact manifolds with special holonomy}, Oxford Mathematical Monographs. Oxford University Press, Oxford, 2000.
\bibitem{Ko} Kodaira, K. {\em On the structure of compact complex analytic surfaces, II}, Amer. J. Math. {\bf 88} (1966), no. 3, 682--721.
\bibitem{LYa} Li, J., Yau, S.-T. {\em The existence of supersymmetric string theory with torsion}, J. Differential Geom. {\bf 70} (2005), no. 1, 143--181.
\bibitem{LYZ} Li, J., Yau, S.-T., Zheng, F. {\em On projectively flat Hermitian manifolds}, Commun. Anal. Geom. {\bf 2} (1994), 103--109.
\bibitem{Li} Lieberman, D.I. {\em Compactness of the Chow scheme: applications to automorphisms and deformations of K\"ahler manifolds}, in {\em Fonctions de plusieurs variables complexes, III (S\'em. Fran\c{c}ois Norguet, 1975–1977)}, pp. 140--186, Lecture Notes in Math., 670, Springer, Berlin, 1978.
\bibitem{LY} Liu, K., Yang, X. {\em Geometry of Hermitian manifolds}, Internat. J. Math. {\bf 23} (2012), no. 6, 1250055, 40 pp.
\bibitem{LT} Lu, P., Tian, G. {\em The complex structures on connected sums of $S^3\times S^3$}, in {\em Manifolds and geometry (Pisa, 1993)}, 284--293,
Sympos. Math., XXXVI, Cambridge Univ. Press, Cambridge, 1996.
\bibitem{MM} Ma, X., Marinescu, G. {\em Holomorphic Morse inequalities and Bergman kernels}, Birkh\"auser, 2007.
\bibitem{Mg} Magnusson, G. {\em Automorphisms and examples of compact non K\"ahler manifolds}, arXiv:1204.3165.
\bibitem{Mj} Manjar\'in, M. {\em Normal almost contact structures and non-K\"ahler compact complex manifolds}, Indiana Univ. Math. J. {\bf 57} (2008), no. 1, 481--507.
\bibitem{Ma} Matsushima, Y. {\em Holomorphic vector fields and the first Chern class of a Hodge manifold}, J. Differential Geom. {\bf 3} (1969), 477--480.
\bibitem{Mc} McMullen, C.T. {\em Dynamics on K3 surfaces: Salem numbers and Siegel disks}, J. Reine Angew. Math. {\bf 545} (2002), 201--233.
\bibitem{Mi}  Michelsohn, M.L. {\em On the existence of special metrics in complex geometry}, Acta Math. {\bf 149} (1982), no. 3-4, 261--295.
\bibitem{Na} Nakamura, I. {\em Complex parallelisable manifolds and their small deformations}, J. Differential Geom. {\bf 10} (1975), 85--112.
\bibitem{Ph} Phong, D.H. {\em Moduli in geometry and physics}, in {\em Geometry and Physics in Cracow}, Acta Phys. Polon. B Proc. Suppl. {\bf 4} (2011), no.3, 351--378.
\bibitem{Po} Popovici, D. {\em Holomorphic Deformations of Balanced Calabi-Yau $\partial\bar\partial$-Manifolds}, arXiv:1304.0331.
\bibitem{Po2} Popovici, D. {\em Aeppli Cohomology Classes Associated with Gauduchon Metrics on Compact Complex Manifolds}, arXiv:1310.3685.
\bibitem{Re} Reid, M. {\em The moduli space of $3$-folds with $K=0$ may nevertheless be irreducible}, Math. Ann. {\bf 278} (1987), no. 1-4, 329--334.
\bibitem{Ro} Rollenske, S. {\em The Kuranishi space of complex parallelisable nilmanifolds}, J. Eur. Math. Soc. (JEMS) {\bf 13} (2011), no. 3, 513--531.
\bibitem{Sh} Shiffman, B. {\em Extension of positive line bundles and meromorphic maps},  Invent. Math. {\bf 15} (1972), no. 4, 332--347.
\bibitem{Si} Simpson, C. {\em Subspaces of moduli spaces of rank one local systems}, Ann. Sci. \'Ecole Norm. Sup. (4) {\bf 26} (1993), no. 3, 361--401.
\bibitem{St} Strominger, A. {\em Superstrings with torsion}, Nuclear Phys. B {\bf 274} (1986), no. 2, 253--284.
\bibitem{Te0} Teleman, A. {\em Projectively flat surfaces and Bogomolov's theorem on class $VII_{0}$-surfaces},  Int. J. Math. {\bf 5} (1994), 253--264.
\bibitem{Te} Teleman, A. {\em The pseudo-effective cone of a non-K\"ahlerian surface and applications}, Math. Ann. {\bf 335} (2006), no. 4, 965--989.
\bibitem{TW} Tosatti, V., Weinkove, B. {\em The complex Monge-Amp\`ere equation on compact Hermitian manifolds}, J. Amer. Math. Soc. {\bf 23} (2010), no.4, 1187--1195.
\bibitem{TW1} Tosatti, V., Weinkove, B. {\em Estimates for the complex Monge-Amp\`ere equation on Hermitian and balanced manifolds}, Asian J. Math. {\bf 14} (2010), no.1, 19--40.
\bibitem{TW2} Tosatti, V., Weinkove, B. {\em On the evolution of a Hermitian metric by its Chern-Ricci form}, arXiv:1201.0312, to appear in J. Differential Geom.
\bibitem{TW3} Tosatti, V., Weinkove, B. {\em The Chern-Ricci flow on complex surfaces}, Compos. Math. {\bf 149} (2013), no.12, 2101--2138.
\bibitem{TW4} Tosatti, V., Weinkove, B. {\em The Monge-Amp\`ere equation for $(n-1)$-plurisubharmonic functions on a compact K\"ahler manifold}, arXiv:1305.7511.
\bibitem{TW5} Tosatti, V., Weinkove, B. {\em Hermitian metrics, $(n-1, n-1)$ forms and Monge-Amp\`ere equations}, arXiv:1310.6326.
\bibitem{TWY} Tosatti, V., Weinkove, B., Yang, X. {\em Collapsing of the Chern-Ricci flow on elliptic surfaces}, arXiv:1302.6545.
\bibitem{TY} Tseng, L.-S., Yau, S.-T. {\em Non-K\"ahler Calabi-Yau manifolds}, in {\em String-Math 2011}, 241--254, Proc. Sympos. Pure Math., 85, Amer. Math. Soc., Providence, RI, 2012.
\bibitem{Ue} Ueno, K. {\em Classification theory of algebraic varieties and compact complex spaces}, Lecture notes in Math. 439, Springer, Berlin, 1975.
\bibitem{Ue2} Ueno, K. {\em On compact analytic threefolds with nontrivial Albanese tori}, Math. Ann. {\bf 278} (1987), no. 1-4, 41--70.
\bibitem{Va} Varouchas, J. {\em K\"ahler spaces and proper open morphisms},  Math. Ann. {\bf 283} (1989), no. 1, 13--52.
\bibitem{Wa} Wang, B. {\em Torsion points on the cohomology jump loci of compact K\"ahler manifolds}, arXiv:1312.6619.
\bibitem{Wi} Winkelmann, J. {\em Complex analytic geometry of complex parallelizable manifolds}, M\'em. Soc. Math. Fr. (N.S.) No. 72-73 (1998).
\bibitem{Ya1}  Yau, S.-T.  \emph{On the Ricci curvature of a compact K\"ahler manifold and the complex Monge-Amp\`ere equation, I}, Comm. Pure Appl. Math. {\bf 31} (1978), 339--411.
\bibitem{Yo2} Yoshihara, H. {\em Structure of complex tori with the automorphisms of maximal degree}, Tsukuba J. Math. {\bf 4} (1980), no. 2, 303--311.
\bibitem{Yo} Yoshioka, K. {\em Moduli spaces of stable sheaves on abelian surfaces}, Math. Ann. {\bf 321} (2001), no. 4, 817--884.
\end{thebibliography}
\end{document}